\def\?[#1]{\textbf{[#1]}\marginpar{\Large{\textbf{??}}}}
\def\smallsection#1{\smallskip\noindent\textbf{#1}.}
\numberwithin{equation}{section}
\newcommand{\norm}[1]{\left\| #1 \right\|}
\numberwithin{equation}{section}
\newcounter{exercise}
\newtheorem{theorem}{Theorem}
\newtheorem{proposition}[theorem]{Proposition}
\newtheorem{lemma}[theorem]{Lemma}
\newtheorem{corollary}[theorem]{Corollary}
\theoremstyle{definition}
\theoremstyle{remark}
\newtheorem{remark}[theorem]{Remark}
\newcommand\R{{\mathbb R}}
\newcommand{\dd}{{\, \mathrm d}}
\let\oldmarginpar\marginpar
\renewcommand\marginpar[1]{\-\oldmarginpar[\raggedleft\footnotesize #1]%
{\raggedright\footnotesize #1}}
\def\smallsection#1{\smallskip\noindent\textbf{#1}.}
\let\epsilon=\varepsilon 
\title[Global smooth solutions for triangular reaction-cross diffusion systems]{Global smooth solutions for triangular reaction-cross diffusion systems}
\author{Jessica Guerand}
\address[Jessica Guerand]{IMAG, Université de Montpellier, 499-554 Rue du Truel, 34090 Montpellier, France}
\email{jessica.guerand@umontpellier.fr}
\author{Angeliki Menegaki}
\address[Angeliki Menegaki]{Institut des hautes études Scientifiques, 35 Rte de Chartres, 91440, Bures-sur-Yvette, France}
\email{menegaki@ihes.fr}
\author{Ariane Trescases}
\address[Ariane Trescases]{IMT; UMR5219, Universit\'e de Toulouse; CNRS, F-31400 Toulouse, France}
\email{ariane.trescases@math.univ-toulouse.fr}
\begin{document}

\maketitle

\sloppy
\begin{abstract}
For a class of reaction cross-diffusion systems of two equations with a cross-diffusion term in the first equation and with self-diffusion terms, we prove that the unique local smooth solution given by Amann theorem is actually global. This class of systems arises in Population dynamics, and extends the triangular Shigesada-Kawasaki-Teramoto system when general power-laws growth are considered in the reaction and diffusion rates.\end{abstract}

\setcounter{secnumdepth}{7}
\setcounter{tocdepth}{1}
\tableofcontents 
 
\section{Introduction} 

 \subsection{The system}
The purpose of this article is to study the global existence of a smooth solution of the following system, 
 
\begin{equation}  \label{eq:the system} \begin{array}{rl}
\partial_t u - \Delta \Big( ( d_u+d_{\alpha} \langle u \rangle ^{\alpha} + d_{\beta} \langle v \rangle ^{\beta} ) u \Big) = u(r_u-r_a \langle u \rangle^a-r_b \langle v \rangle^b),&  \text{in}\ [0,\infty) \times \Omega, \\   
\partial_t v - \Delta \big(( d_v +d_{\gamma}\langle v \rangle ^{\gamma}) v \big) = v(r_v-r_c \langle v \rangle^c-r_d \langle u \rangle^d), & \text{in}\  [0,\infty) \times \Omega, \\
\nabla u  \cdot n =  \nabla v  \cdot n =0, & \text{in}\  [0,\infty) \times \partial \Omega, \\
u(0,\cdot)= u_0, \quad v(0,\cdot)= v_0, & \text{in}\ \Omega,
\end{array}
\end{equation}
with the bracket $\langle \cdot \rangle $ defined by
\begin{equation}
\langle z \rangle  := \left( 1+z^2 \right)^{1/2}, \qquad z \ge 0,
\end{equation}
and where $u=u(t,x) \geq 0$ and $v=v(t,x) \geq0$ are the unknowns, $\Omega$ is a smooth open bounded domain of $\mathbb{R}^m$ for $m\geq 2$ and $n(x)$ is the outward normal vector at point $x \in \partial \Omega$. The functions $u_0, v_0 : \Omega \rightarrow \mathbb{R}$ are nonnegative initial data belonging to $C^{2+\nu}$ for some $\nu>0$.
The constant parameters are defined in the set $D$,
\small{\begin{equation}
\label{def cst}
D:= \{d_u, d_v, d_{\alpha}, d_{\beta}, d_{\gamma}, r_{u}, r_{v}, r_{a}, r_{b}, r_{c}, r_{d}, a, b, c, d, \alpha, \beta, \gamma \} \in \left((0,\infty)\right)^{17}\times [0,\infty),
\end{equation}}\normalsize
and are assumed to satisfy
\begin{equation}
\label{assump cst}
d < \frac{1}{2} \max\{ 1+a, 2+ \alpha, a+\alpha, 2\alpha+2/m\} \quad \text{and} \quad d < \max\left(\frac{8\alpha(m+1)}{(m^2-4)_+},\frac{4\alpha +2a}{m+2}\right),
\end{equation}
where we recall that $m$ is the space dimension.
This system is a prototypical extension of the triangular Shigasada-Kawasaki-Teramoto system arising in Population dynamics, \cite{SKT}:
 \begin{equation}  \label{sys:SKTq} \begin{array}{rl}
\partial_t u - \Delta \Big( ( d_u+d_{\alpha} u + d_{\beta}v) u \Big) = u(r_u-r_a u-r_bv),&  \text{in}\ [0,\infty) \times \Omega, \\   
\partial_t v - \Delta \big( (d_v +d_{\gamma}v) v \big) = v(r_v-r_c v-r_d u), & \text{in}\  [0,\infty) \times \Omega ,
\end{array} 
\end{equation}
when the linear terms in the reaction and diffusion rates are replaced with functionals with a power law growth at infinity. System \eqref{sys:SKTq} and its extension System \eqref{eq:the system} model the evolution of the space densities of the populations of two living species interacting through their movement. The terms $d_\alpha \langle u \rangle ^{\alpha} u$ and $d_\gamma \langle v \rangle ^{\gamma} v$ are the self-diffusion terms, and $d_{\beta}\langle v \rangle ^{\beta} u$ is the cross-diffusion term. In the diffusion rates, we use the bracket $\langle \cdot \rangle$ taken at some powers $\alpha$, $\beta$ or $\gamma$ to guarantee a smooth behaviour close to zero even for powers $\alpha$, $\beta$ or $\gamma$ less than one. When the power $\alpha$, $\beta$ or $\gamma$ respectively is greater or equal to one, one could replace the term $\langle u \rangle^\alpha$, $\langle v \rangle^\beta$ or $\langle v \rangle^\gamma$ respectively, by the simpler power law term $u^\alpha$, $v^\beta$ or $v^\gamma$ respectively, and our results would still apply. The same applies for the reaction rates $\langle u \rangle^a$, $\langle v \rangle^b$, $\langle v \rangle^c$ and $\langle u \rangle^d$.


By considering System \eqref{eq:the system} instead of the original system \eqref{sys:SKTq} we follow the idea of replacing linear interaction rates by more general nonlinear terms. This idea goes back to the studies of \cite{GJ72,GA73} where some non-linear functions were proved to be more appropriate than linear ones to model the competitive interaction between two species (in their study, two species of drosophila), modelled in that case with an ordinary differential system. We use power laws for simplicity, though a wide range of functions could be considered. Such generalized (triangular) Shigasada-Kawasaki-Teramoto systems have been studied for example in \cite{PozioTesei,Wang2005,Yamada,DT15,AT16}.

 \subsection{Main result}
Our main result is the following. 
 \begin{theorem}[Global smooth solutions]
 \label{th: main}
 Let $\Omega$ be a bounded open subset of $\R^m$ such that $\partial \Omega$ is smooth. Let the coefficients of \eqref{eq:the system} satisfy \eqref{def cst}--\eqref{assump cst} and  $u_0, v_0$ be nonnegative functions in $C^{2+\nu}(\overline{\Omega})$ for some $\nu>0$ satisfying the Neumann boundary condition $$\nabla u_0 \cdot n =\nabla v_0 \cdot n=0 \quad \mbox{ on } \partial \Omega.$$ Then the system possesses a unique, nonnegative, global solution $(u,v)$ such that 
 $$u,v \in C^{\frac{2+\nu}{2},2+\nu}([0,\infty)\times \overline{\Omega})\cap C^{\infty}((0,\infty)\times\overline{\Omega}).$$
 \end{theorem}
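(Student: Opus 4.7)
The strategy is to combine Amann's local existence theorem with a sequence of a priori estimates culminating in a uniform $L^\infty$ bound on $(u,v)$ over any finite time interval. Amann's theorem provides a unique classical solution on a maximal interval $[0,T_{\max})$ together with a continuation criterion: $T_{\max}=\infty$ unless $(u,v)$ blows up in a suitable norm (e.g.\ $W^{1,p}$ for some $p>m$). Nonnegativity of $u$ and $v$ is inherited from the nonnegativity of the initial data via the scalar weak maximum principle applied to each equation, since the reaction terms vanish on the corresponding axis and the boundary condition is Neumann.

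The triangular structure of the system is then exploited: the $v$-equation is treated first as a scalar porous-medium-type equation with source. Using $u\geq 0$, the reaction term satisfies $v(r_v - r_c\langle v\rangle^c - r_d\langle u\rangle^d)\leq r_v v - r_c v^{1+c}$, so by comparison with the associated logistic ODE one obtains a uniform-in-time bound $\|v(t)\|_\infty \leq C(\|v_0\|_\infty, r_v,r_c,c)$. This is the only place where the $v$-equation can be treated in isolation, but it already gives $v\in L^\infty$ once and for all.

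The core of the argument is an $L^p$ estimate for $u$ for $p$ arbitrarily large on finite time intervals. Testing the $u$-equation against $u^{p-1}$ and integrating by parts against the Neumann boundary, the self-diffusion $d_\alpha\langle u\rangle^\alpha u$ yields a gradient contribution comparable to $\int|\nabla u^{(p+\alpha)/2}|^2\,dx$, while the reaction produces a damping of order $\int u^{p+a}\,dx$; the cross-diffusion term $d_\beta\langle v\rangle^\beta u$ is absorbed using the $L^\infty$ bound on $v$. A Gagliardo--Nirenberg interpolation between the gradient term and lower $L^q$ norms closes the inequality provided the exponents are compatible. The first hypothesis on $d$ in \eqref{assump cst} (with its maximum over $1+a$, $2+\alpha$, $a+\alpha$, $2\alpha+2/m$) is precisely what ensures that the cross-reaction $-r_d \langle u\rangle^d v$ entering the $v$-equation can be controlled by whichever of these four estimates is available, while the second constraint governs the alternative branch arising from the Sobolev embedding in dimension $m\geq 3$. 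Once $u \in L^p_{t,x}$ for $p$ large, standard parabolic $L^p$ theory applied to the $v$-equation yields $v\in C^\nu$ for some $\nu>0$, and then Moser or De Giorgi iteration on the $u$-equation (now a scalar quasilinear equation with bounded coefficients) gives $u\in L^\infty$.

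With $\|u\|_\infty$ and $\|v\|_\infty$ bounded on every finite time interval, the continuation criterion yields $T_{\max}=\infty$. Schauder estimates applied successively to the two scalar quasilinear equations upgrade the regularity to $C^{(2+\nu)/2,\,2+\nu}([0,\infty)\times\overline{\Omega})$, and a classical bootstrap (differentiating in $t$ and $x$, iterating Schauder) gives $C^\infty$ smoothness on $(0,\infty)\times\overline{\Omega}$. The main obstacle is the $L^p$ estimate for $u$: the interplay between the regularizing self-diffusion (which only controls $u^{(p+\alpha)/2}$ in $H^1$), the damping $u^{1+a}$, the $L^\infty$ bound on $v$, and the destabilizing feedback $u^d$ in the $v$-equation must be balanced exactly, and this is where the rather intricate assumption \eqref{assump cst} enters.
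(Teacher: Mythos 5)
There is a genuine gap at what you call the core of the argument. When you test the first equation of \eqref{eq:the system} against $u^{p-1}$, the cross-diffusion term $\Delta\big(d_\beta\langle v\rangle^\beta u\big)$ does \emph{not} reduce to something controlled by $\|v\|_{L^\infty}$: after integration by parts it splits into a good term $\sim\int d_\beta\langle v\rangle^\beta|\nabla u^{p/2}|^2$ plus a term of the form
\begin{equation*}
\iint \beta d_\beta\,\frac{v}{\langle v\rangle^{2-\beta}}\,u^{\frac{p}{2}-\frac{\alpha}{2}}\,\nabla v\cdot\nabla u^{\frac{p}{2}+\frac{\alpha}{2}},
\end{equation*}
which involves $\nabla v$ and cannot be absorbed by the maximum principle bound on $v$. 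Controlling it (after Young and H\"older, as in Lemma \ref{power energ u}) requires an $L^p$ bound on $\nabla v$; but the regularity of $\nabla v$ is itself coupled back to $u$, since the source $-r_d\langle u\rangle^d v$ in the second equation feeds $u^d$ into any estimate for $v$. So your claimed direct closure of the $L^p$ estimate for $u$ by Gagliardo--Nirenberg alone would fail, and with it the subsequent steps ($v\in C^\nu$, then Moser/De Giorgi for $u$), since those presuppose $u\in L^p$ for large $p$.

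Resolving this coupling is exactly what the paper's intermediate machinery does and what the two conditions in \eqref{assump cst} are for, and your attribution of their roles is not accurate. The first condition ($2d<\max\{1+a,2+\alpha,a+\alpha,2\alpha+2/m\}$) enters in Proposition \ref{3.4 in Tao_Winkler}: an energy estimate for $w=(d_v+d_\gamma\langle v\rangle^\gamma)v$ tested against $\Delta w$ bounds $\|\Delta w\|_{L^2}$ by $\|u^{2d}\|_{L^1}$, and a case analysis (duality estimate giving $u\in L^{2+\alpha}$, the reaction damping $u^{a+\rho}$, or Gagliardo--Nirenberg with the mass bound) closes a \emph{coupled} inequality for $u$ and $w$ yielding $\nabla v\in L^4(\Omega_T)$. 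The second condition ($d<\max(\frac{8\alpha(m+1)}{(m^2-4)_+},\frac{4\alpha+2a}{m+2})$) is not about ``the Sobolev branch in $m\geq3$'' per se: it guarantees $u^d\in L^{r'}(\Omega_T)$ for some $r'>\frac{m+2}{2}$, which gives H\"older continuity of $v$; that continuity is needed to invoke the maximal regularity Lemma \ref{W12 estimates} (continuous diffusion coefficient) in Corollary \ref{gain of int of grad v}, which transfers $u^d\in L^p$ to $\nabla v\in L^{2p}$. Only after the iteration of Corollaries \ref{gain integ for grad v implies u} and \ref{gain of int of grad v} (Lemma \ref{u grad v any Lp space}) does one have $u,\nabla v\in L^p$ for all $p$, and then the De Giorgi step, H\"older and Schauder estimates, and Amann's criterion proceed as you outline. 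Your beginning (Amann, nonnegativity, $\|v\|_{L^\infty}$ via the maximum principle) and your ending (Schauder bootstrap, continuation) match the paper, but the missing bootstrap between the integrability of $u$ and of $\nabla v$ is the heart of the proof and is absent from your proposal.
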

 
To prove this theorem, we rely on Amann's results which gives the local existence of a unique smooth solution together with an extension criteria (for a large classs of system including \eqref{eq:the system}, see \cite{Am90,Am89,Am93}). More precisely, for System \eqref{eq:the system}, Amann's result reads as follows. 
\begin{theorem}[Local smooth solutions \cite{Am90,Am89,Am93}]
\label{amann}\label{thm:local}
 Let $\Omega$ be a bounded open subset of $\R^m$ such that $\partial \Omega$ is smooth. Let the coefficients of \eqref{eq:the system} satisfy \eqref{def cst} and  $u_0, v_0$ be nonnegative functions in $W^{1,p_0}(\Omega)$ with $p_0>m$. Then there exists a maximal time $t_{\mathrm{max}} \in \left(0,\infty\right]$ such that the system \eqref{eq:the system} has a unique nonnegative solution $(u,v)$ in $ \left(0,t_{\mathrm{max}}\right)\times \Omega$ such that 
 $$u,v \in C([0,t_{\mathrm{max}}),W^{1,p_0}(\Omega))\cap C^{\infty}((0,t_{\mathrm{max}})\times\overline{\Omega}),$$
Moreover, if $t_\mathrm{max}<\infty$, then 
$$\lim\limits_{t\rightarrow t^{-}_{\mathrm{max}}} \left[\norm{u(t,\cdot)}_{W^{1,p_0}(\Omega)} + \norm{v(t,\cdot)}_{W^{1,p_0}(\Omega)}  \right]= \infty.$$ 
\end{theorem}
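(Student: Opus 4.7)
The plan is to verify that System \eqref{eq:the system} fits into the abstract framework of quasilinear parabolic systems developed by Amann in \cite{Am90,Am89,Am93}, and then to invoke directly the corresponding local existence and blow-up criterion.

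First, I would rewrite the system in divergence form suitable for Amann's theory. Setting $w=(u,v)^T$, the diffusion part reads $\partial_t w - \nabla\cdot\bigl(A(w)\nabla w\bigr)$, where $A(w)$ is the Jacobian with respect to $(u,v)$ of the flux functions
\[
F_1(u,v)=\bigl(d_u+d_\alpha\langle u\rangle^\alpha+d_\beta\langle v\rangle^\beta\bigr)u, \qquad F_2(u,v)=\bigl(d_v+d_\gamma\langle v\rangle^\gamma\bigr)v.
\]
Because the first flux $F_1$ depends on both unknowns but the second $F_2$ depends only on $v$, the matrix $A(w)$ is upper triangular, with diagonal entries
\[
A_{11}(u,v)=d_u+d_\alpha\langle u\rangle^\alpha+d_\alpha \alpha\,\tfrac{u^2}{\langle u\rangle^{2-\alpha}}+d_\beta\langle v\rangle^\beta, \qquad A_{22}(v)=d_v+d_\gamma\langle v\rangle^\gamma+d_\gamma\gamma\,\tfrac{v^2}{\langle v\rangle^{2-\gamma}}.
\]
For $u,v\ge 0$ both diagonal entries are bounded below by $d_u,d_v>0$ respectively, so the eigenvalues of $A(w)$ are strictly positive. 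This gives \emph{normal ellipticity} of the principal part in the sense required by Amann. Smoothness of the coefficients is ensured by the use of the regularised bracket $\langle\cdot\rangle$, which is $C^\infty$ in its argument for all real exponents, and by the polynomial form of the reaction terms. The boundary conditions are homogeneous Neumann conditions imposed componentwise, which form an admissible set of boundary operators in Amann's framework.

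With these structural checks in place, Theorems in \cite{Am90,Am89,Am93} directly yield the existence of a maximal time $t_{\max}\in(0,\infty]$ and a unique solution
\[
(u,v)\in C\bigl([0,t_{\max}),W^{1,p_0}(\Omega)\bigr)\cap C^\infty\bigl((0,t_{\max})\times\overline\Omega\bigr),
\]
together with the blow-up alternative in the $W^{1,p_0}$ norm, provided $p_0>m$ (which guarantees the Sobolev embedding $W^{1,p_0}\hookrightarrow C(\overline\Omega)$ needed to evaluate nonlinear coefficients pointwise). Nonnegativity of $(u,v)$ would follow from the invariance of the positive cone: writing the equations in non-divergence form, the reactions $u(r_u-\ldots)$ and $v(r_v-\ldots)$ vanish on $\{u=0\}$ and $\{v=0\}$ respectively, and the triangular structure together with the Neumann boundary condition allows an application of the weak maximum principle to the scalar equation for $v$ first (closed in $v$ alone), then to the equation for $u$ with $v\ge 0$ treated as a given coefficient.

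The main subtlety is checking the normal ellipticity carefully, since Amann's condition requires the spectrum of $A(w)$ to lie in a closed sector of the right half plane uniformly in $w$ on compact sets; here the triangular structure makes this immediate because the spectrum of a triangular matrix consists of its diagonal entries. The only other point requiring mild care is the compatibility between the initial regularity $W^{1,p_0}(\Omega)$ and the way nonlinear coefficients act, which is handled in the standard manner using $p_0>m$ as in Amann's original statements. Since the theorem is ultimately a specialisation of results available in the literature to a system whose structure is particularly favourable (triangular, smooth, polynomial growth), no genuinely new argument is required beyond these verifications.
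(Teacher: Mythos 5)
The paper offers no proof of this theorem at all --- it is stated as a direct citation of Amann's general results --- so your verification that the system fits Amann's framework (upper-triangular diffusion matrix with diagonal entries bounded below by $d_u,d_v>0$, hence normally elliptic for all real $u,v$; smooth coefficients via the bracket $\langle\cdot\rangle$; Neumann boundary operators; quasi-positive reactions giving invariance of the positive cone) is exactly the intended reading and is correct. Your checks, including the computation of $A_{11}$ and $A_{22}$ and the use of $p_0>m$ for the embedding $W^{1,p_0}\hookrightarrow C(\overline\Omega)$, match what one needs to specialise \cite{Am89,Am90,Am93} to \eqref{eq:the system}.
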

 
 \subsection{State of the art}
The local existence of smooth solutions together with an extension criteria for a wide class of parabolic systems including \eqref{eq:the system} was established by Amann in \cite{Am90,Am89,Am93}, as stated in Theorem \ref{amann}.
 
For global existence, many works focus on the original (quadratic) triangular Shigesada-Kawasaki-Teramoto system \eqref{sys:SKTq}. 
When there is self-diffusion in the first equation, that is when $d_\alpha>0$, the existence of global smooth solutions was first established in dimensions $m=1$ and $m=2$, \cite{LouNiWu,Yagi}, then in dimension $m\le5$ in \cite{ChoiLuiYamada04,LeNguyenNguyen}, then $m \le 9$ in \cite{VTP08}. Finally, Hoang \emph{et al.} recently extended these results to any dimension $m\ge1$ in \cite{HNP2}. Their method rely on De Giorgi techniques and on the proof of a Sobolev regularity result for nonlinear parabolic scalar equations with self-diffusion. We also mention the interesting work of \cite{TaoWinkler_stab} where the authors show that for the case $d_\alpha>0$, in convex domains of dimension $m \le 9$, the global solutions are uniformly bounded and the long-time behaviour is studied.
The situation without self-diffusion in the first equation ($d_\alpha=0$) is more delicate, and global smooth solutions have been obtained only in dimension $m=2$ in \cite{LouNiWu}, with the exception of the case with no self diffusion at all ($d_\alpha = d_\gamma =0$) which is treated in any dimension $m\ge 1$ in \cite{ChoiLuiYamada04} and \cite{DT15}. However, global weak solutions exist in any dimension $m$ even for the case $d_\alpha=0$ and $d_\gamma>0$: see \cite{AT16}.

Let us now present works on more generic forms of triangular cross-diffusion systems allowing for power law growth in the spirit of \eqref{eq:the system}. In absence of self-diffusion terms, that is $\alpha=\gamma=0$, global smooth solutions were obtained in \cite{PozioTesei} with assumptions on the reaction part that amount to assume a fast decay in $u$ of the reaction terms and in \cite{Yamada} when $a>d$. Global weak solutions were furthermore obtained in \cite{DT15} in the same case $a>d$ and when $a\le 1$ and $d\le 2$ (without the brackets $\langle \cdot \rangle$). Allowing the presence of self-diffusion, global existence has been obtained in \cite{Wang2005} in the case $\alpha>0$, $\gamma=0$ under a (dimension-dependent) condition of smallness of the parameter $d$ w.r.t. the parameter $a$. Then, global weak solutions were obtained in \cite{AT16} when the powers satisfy: $(\alpha>0, d<2+\alpha, a <1+ \alpha)$ or $(\alpha=0, d \le 2, a \le 1)$, with no condition on $\gamma$ other that nonnegativity (and again, without the need of the brackets $\langle \cdot \rangle$). We also mention here the recent paper \cite{Le21} where regularity criteria leading to uniqueness of a specific weak-strong notion of solutions are provided for a class of cross-diffusion systems. 

In the general case where $\gamma$ can be positive, smooth solutions were not studied, which is the purpose of the present work. The case $\gamma>0$ is the most delicate case, as one can not rely on the properties of the heat operator: see Remark \ref{rk:gamma0}. In particular, for the original SKT system \eqref{sys:SKTq}, we observe from the literature that the presence of self-diffusion in the first equation $\alpha>0$ is crucial to obtain regularity. Here, we want to investigate how much self-diffusion, quantified by the parameter $\alpha$, is actually enough to obtain smooth solutions for the more general system \eqref{eq:the system}. A first answer is given by the assumption \eqref{assump cst}. However, we give in the following subsection some possible relaxations of these assumptions.

 
 \subsection{Strategy of the proof and possible extensions}\label{sec:strategy}
 Our strategy of proof relies on the following main ingredients: first we apply Amann's theorem to obtain local smooth solutions on $[0,t_\mathrm{max})$, and the objective is to extend them globally. Let $T \in (0,\infty)$ with $T \leq t_{\mathrm{max}}$. Our objective is to prove that the local solution satisfy
\begin{equation}
\label{condition on the grad}
\sup_{t\in (0,T )}\norm{u(t,\cdot)}_{W^{1,p_0}(\Omega)} + \norm{v(t,\cdot)}_{W^{1,p_0}(\Omega)} < \infty,
\end{equation}
so that necessarily by Theorem \ref{amann},  the unique local solution in Theorem \ref{amann} is global, which gives Theorem \ref{th: main}.
 
Our first basic estimates are the maximum principle for $v$ and a combination of a duality estimate and an energy-type estimate coming from the logistic-type growth for $u$. Then, we iterate some higher-order energy-type estimates for $u$ and classical parabolic regularity estimates for $v$ in Sobolev spaces in order to obtain that both $u$ and $\nabla v$ are in $L^p((0,T)\times\Omega)$ for all $p>1$. We then obtain that $u$ is bounded by defining a class of energy estimate that allows us to use De Giorgi method. And finally we conclude using the classical H\"older and Schauder estimates. 

We list in the following remarks some possible extensions of this result and perspectives.
\begin{remark}[The case $\alpha=0$]
When $\alpha>0$, the self-diffusion in the first equation allows to prove more regularity for $u$. This appears in our assumption \eqref{assump cst}, and can also be seen directly for example in the duality estimate which gives a bound for $u$ in $L^{2+\alpha}$. The case where $\alpha=0$ is an unfavorable case. It is actually covered by our method when the reaction parameters $a$ and $d$ satisfy $d (m+2)<2a$.
\end{remark} 
\begin{remark}[The case $\gamma=0$]\label{rk:gamma0}
The case where there is no self-diffusion in the second equation, that is $\gamma=0$, is a very favorable case. In this case, the properties of the heat operator allow to obtain regularity for the first and second space derivatives of $v$ without any dimension-dependent assumption, and this even in the unfavorable case where there is no self-diffusion in the first equation. For example, the results hold under the condition $\gamma=0$, $\alpha=0$, $d<a$, see \cite{Yamada} or \cite{DT15}.
\end{remark}
\begin{remark}[Dimension-dependent assumptions]
We expect that our result holds for larger range for $d$ rather than $d < \max\left(\frac{8\alpha(m+1)}{(m^2-4)_+},\frac{4\alpha +2a}{m+2}\right)$. We use this assumption in the proof of Lemma \ref{u grad v any Lp space} in order to obtain the continuity of $v$ (thanks to dimension-dependent embeddings), which allows to apply classical results of parabolic regularity in Sobolev spaces: see the result due to Ladyzenskaya \emph{et al.} stated later in Lemma \ref{W12 estimates}. In fact, for the original system \eqref{sys:SKTq} we recover the results in \cite{VTP08} with the same restriction on the dimension, see also \cite{TaoWinkler_stab}.
However, some recent results for the original system \eqref{sys:SKTq} due to Hoang \emph{et al.}, \cite{HNP2}, suggest that we can bypass this classical tool and obtain directly estimates on the gradient of $v$ in any $L^p$ space, conditionally to the $L^p$ regularity of the right hand side, without requiring the continuity of $v$. The result of Hoang \emph{et al.} could be extended to system \eqref{eq:the system} with a generic $\gamma>0$, and as a consequence we could remove the space-dependent assumption. This is the purpose of an ongoing work.
\end{remark}
\begin{remark}[Weak solutions] Using the same techniques we could also prove that the weak solutions obtained in \cite{AT16} with initial conditions in $W^{2,\infty}$, are actually smooth. In that case one can deduce that the two solutions in \cite{Am93} and in \cite{AT16} must coincide.
\end{remark}
 
\subsection{Notations} 
We end this section by giving some notations and definitions.

Recall that $T\in (0,\infty)$ is defined in Sec. \ref{sec:strategy}, with $T\le t_\mathrm{max}$, and $(u,v)$ are nonnegative solutions of \eqref{eq:the system} on $\left[0,T\right)\times \Omega$ given by Theorem \ref{thm:local}.

We define the domain $\Omega_T=(0,T)\times \Omega$. Throughout this paper, $g\in W^{2}_p(\Omega)$ means $g, \partial_{x_i} g,\partial_{x_i x_j} g$ are in $L^p(\Omega)$, and  $g\in W^{1,2}_p (\Omega_T)$ means $g, \partial_{x_i} g,\partial_{x_i x_j} g $ and $\partial_t g$ are in $L^{p}(\Omega_T)$ for $i,j=1,\dots,m$. The space $W^{1,2}_p(\Omega_T)$ is endowed with the norm
$$ \| g\|_{W^{1,2}_p(\Omega_T)}=\| g\|_{L^p(\Omega_T)}+\| \partial_t g\|_{L^p(\Omega_T)}+\| \nabla g\|_{L^p(\Omega_T)}+ \sum\limits_{i,j=1}^{m} \| \partial_{x_i x_j} g\|_{L^p(\Omega_T)}.$$
Here $\nabla g= (\partial_{x_1}g, \dots, \partial_{x_m} g)$. 
We define also the positive part of a function $g_+=\max (g,0)$. 
We occasionally write $f \lesssim g$ in order to say that $f \le Cg$ for some constant $C$ 
which depends on time $T$, domain $\Omega$ and the parameters $D$. 

 \section{Gain of integrability of $\nabla_x v$ and $u$}
 
In this section we prove that $u$ and $\nabla_x v$ are in any $L^p$ space with $p\in (0,\infty)$.  To get this result, we use an iteration process to transfer of regularity between $u$ and $\nabla v$. We first give some basic estimates to initialize the iteration process. 
Then we give some conditional properties of transfer of regularity between $u$ and $\nabla_x v$ and we conclude by iteration. 

\subsection{Basic estimates}

We start with the following basic estimates.
\begin{lemma}[Basic estimates]
\label{lem:basic}
The solution $(u,v)$ of \eqref{eq:the system} satisfies the following bounds
\begin{align} \label{basic}
\sup_{t\in(0,T)} \int_\Omega u(t,x) \dd x \le C, \qquad \iint_{\Omega_T} u^{1+a}(t,x) \dd x \dd t \le C_1(T),\\
\sup_{t\in(0,T)} \|v(t,\cdot)\|_{L^\infty(\Omega)} \le C,
\end{align} 
where $C$ depends only on the initial data, the domain $\Omega$ and the parameters $D$, while $C_1(T)$ depends linearly on time.
\end{lemma}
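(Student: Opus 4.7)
The plan is to treat the three bounds separately since each leverages a different structural feature of \eqref{eq:the system}, and no step needs more than an elementary parabolic or ODE argument.

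\textbf{$L^\infty$ bound on $v$.} With $u \geq 0$ treated as a given coefficient, the second equation in \eqref{eq:the system} is a scalar nonlinear parabolic equation whose reaction term $v(r_v - r_c \langle v \rangle^c - r_d \langle u \rangle^d)$ becomes nonpositive as soon as $\langle v \rangle^c \geq r_v/r_c$. I pick a constant $M \geq \max(\|v_0\|_{L^\infty(\Omega)}, M_0)$ with $M_0>0$ large enough that $r_v - r_c \langle M_0 \rangle^c \leq 0$, rewrite the diffusion as $\nabla \cdot (\Phi'(v) \nabla v)$ with $\Phi(s):=(d_v + d_\gamma \langle s \rangle^\gamma)s$, and test the equation against $(v-M)_+$. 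Using $\Phi'>0$ and the Neumann condition $\nabla v \cdot n = 0$ gives, after integration by parts,
$$\frac{1}{2} \frac{d}{dt} \int_\Omega (v-M)_+^2 \dd x + \int_\Omega \Phi'(v) |\nabla (v-M)_+|^2 \dd x \leq 0,$$
since on $\{v > M\}$ the reaction term has sign $\leq 0$ by construction of $M_0$ and $u \geq 0$. As $(v_0 - M)_+ \equiv 0$, this forces $v \leq M$ for all $t \in [0,T)$.

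\textbf{Bounds on $u$.} I integrate the first equation of \eqref{eq:the system} over $\Omega$. The diffusion contribution drops out because, by the product rule, the outward normal derivative of $(d_u + d_\alpha \langle u \rangle^\alpha + d_\beta \langle v \rangle^\beta) u$ on $\partial \Omega$ is a linear combination of $\nabla u \cdot n$ and $\nabla v \cdot n$, both zero. Discarding the nonnegative consumption $r_b \langle v \rangle^b u$ and using $\langle u \rangle^a \geq u^a$, I obtain
$$\frac{d}{dt} \int_\Omega u \dd x \leq r_u \int_\Omega u \dd x - r_a \int_\Omega u^{1+a} \dd x.$$
Young's inequality absorbs $r_u u$ into $\frac{r_a}{2} u^{1+a}$ plus a constant $K$ depending only on the parameters, and then Jensen's inequality yields
$$\frac{d}{dt} \int_\Omega u \dd x \leq -\frac{r_a}{2|\Omega|^a} \left( \int_\Omega u \dd x \right)^{1+a} + K|\Omega|,$$
which is super-linearly dissipative, so $y(t) := \int_\Omega u(t,\cdot)\dd x$ stays bounded uniformly in $t$. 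For the spacetime bound I simply integrate the inequality \emph{before} the Jensen step from $0$ to $T$, giving $\frac{r_a}{2} \iint_{\Omega_T} u^{1+a} \dd x \dd t \leq \int_\Omega u_0 \dd x + K|\Omega| T$, which is linear in $T$ as required.

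The only mildly delicate point is the vanishing of the diffusion boundary flux for the $u$-equation: this uses both Neumann conditions simultaneously, and the cross-diffusion term creates no obstruction thanks to the product rule. I do not expect any genuine obstacle in this lemma; the main use of these basic estimates will be to initialise the subsequent iteration of higher-order integrability gains.
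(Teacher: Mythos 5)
Your proof is correct and follows essentially the same route as the paper: integrate the $u$-equation in space and exploit the logistic structure $r_u z-r_a z\langle z\rangle^a\le C-\tfrac{r_a}{2}z^{1+a}$ for the two $u$-bounds, and invoke the maximum principle for $v$, which you simply make explicit via the $(v-M)_+$ truncation test (the paper just cites the maximum principle). Your extra Young--Jensen ODE step is a harmless refinement that in fact yields the sup-in-time bound on $\int_\Omega u$ uniformly in $T$, consistent with the statement's claim that $C$ is time-independent.
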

\begin{proof}
Let us integrate the first equation of \eqref{eq:the system} on $\Omega_t$ for $t\in \left(0,T\right]$
\begin{equation}
\label{integr}
\int_\Omega u (t,x) \dd x = \int_\Omega u (0,x) \dd x + \iint_{\Omega_T} u\, (d_u - d_a \langle u\rangle^a-d_b \langle v\rangle^b) \dd x \dd s.
\end{equation}
Using the nonnegativity of $u$ and $v$ and the fact that $d_u z - d_a z\langle z\rangle^a + \frac{d_a}2 z^{a+1}  \le C$ for all $z\ge 0$, we get the estimate using twice \eqref{integr}, once for time $t$ and once for time $T$,
\begin{equation}
\int_\Omega u (t,x) \dd x + \iint_{\Omega_T} \frac{d_a}2 u^{1+a} \dd x \dd t \le \int_\Omega u (0,x) \dd x + C |\Omega| T,
\end{equation}
which concludes the two first estimates. The last estimate is a direct consequence of the maximum principle.
\end{proof}

\begin{lemma}[Interpolation Inequality] 
\label{lem:interpolation}
Let $w$ be the function $w=(d_v+d_{\gamma}\langle v \rangle ^\gamma)v$ and $p \geq 1$. It holds that
$$
 \|w \|_{L^{2p}((0,T),W^{1,2p}(\Omega))}^{2p} 
 \leq  C \|w \|^{p}_{L^{p}((0,T),W^{2,p}(\Omega))}.$$
\end{lemma}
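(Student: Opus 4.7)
The plan is to view the claim as a pointwise-in-time Gagliardo--Nirenberg interpolation inequality combined with the $L^\infty$ bound for $v$ (and hence $w$) provided by Lemma \ref{lem:basic}, and then integrate in time.

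First I would record the $L^\infty$ bound on $w$. Since $\sup_{t\in(0,T)}\|v(t,\cdot)\|_{L^\infty(\Omega)}\le C$ by Lemma \ref{lem:basic} and $w=(d_v+d_\gamma\langle v\rangle^\gamma)v$ is a smooth function of $v$, one gets $\|w\|_{L^\infty(\Omega_T)}\le M$ for a constant $M$ depending only on the data. Next, at each fixed time $t$, I would apply the Gagliardo--Nirenberg interpolation inequality on the smooth bounded domain $\Omega$ with $j=1$, $k=2$, target exponent $2p$, derivative exponent $p$ and low-order exponent $\infty$. The scaling condition
\[
\tfrac{1}{2p}-\tfrac{1}{m}=\theta\!\left(\tfrac{1}{p}-\tfrac{2}{m}\right)
\]
is solved by $\theta=\tfrac12$, which lies in the admissible range $[j/k,1]=[1/2,1]$, so
\[
\|\nabla w(t,\cdot)\|_{L^{2p}(\Omega)}\le C\,\|w(t,\cdot)\|_{W^{2,p}(\Omega)}^{1/2}\,\|w(t,\cdot)\|_{L^\infty(\Omega)}^{1/2}
\]
(the additional lower-order term that appears in the bounded-domain version of GN is controlled by $\|w\|_{L^p}\le\|w\|_{W^{2,p}}$). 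Raising to the $2p$-th power and using the $L^\infty$ bound on $w$ yields
\[
\|\nabla w(t,\cdot)\|_{L^{2p}(\Omega)}^{2p}\le C M^{p}\,\|w(t,\cdot)\|_{W^{2,p}(\Omega)}^{p}.
\]

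To cover the zero-order part of the $W^{1,2p}$ norm, I would use the elementary interpolation $\|w\|_{L^{2p}}^{2p}\le \|w\|_{L^\infty}^{p}\|w\|_{L^p}^{p}\le M^{p}\|w\|_{W^{2,p}}^{p}$. Adding the two pointwise bounds and integrating in time over $(0,T)$ gives
\[
\int_0^T\|w(t,\cdot)\|_{W^{1,2p}(\Omega)}^{2p}\,\dd t\le C M^{p}\int_0^T\|w(t,\cdot)\|_{W^{2,p}(\Omega)}^{p}\,\dd t,
\]
which is the claimed estimate.

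There is essentially no serious obstacle here: the two delicate points are (i) verifying that the chosen triple of exponents satisfies the scaling constraint of GN with the borderline value $\theta=1/2$, and (ii) handling the extra lower-order term that appears in GN on bounded domains, which as noted above is harmless. The uniform $L^\infty$ bound for $w$ coming from Lemma \ref{lem:basic} is what makes the constant finite and independent of $t$.
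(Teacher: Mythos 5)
Your proposal is correct and follows essentially the same route as the paper: a fixed-time interpolation of the $W^{1,2p}$ norm between $L^\infty(\Omega)$ and $W^{2,p}(\Omega)$ with exponent $\theta=\tfrac12$, combined with the uniform $L^\infty$ bound on $w$ from Lemma \ref{lem:basic}, then integration in time. The only difference is that you derive the interpolation inequality from Gagliardo--Nirenberg (treating the gradient and zero-order parts separately), while the paper quotes it directly from \cite{BL12} or \cite{BM19}; this is a matter of reference, not of method.
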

\begin{proof}
To prove this we use $W^{2,p}(\Omega)\cap L^{\infty} (\Omega) \subset W^{1,2p}(\Omega)$ of \cite[Theorem 6.4.5 p.153]{BL12} or \cite[Theorem 1]{BM19}  which gives
$$ \|w \|_{W^{1,2p}(\Omega)} \leq C \|w \|^{\theta}_{L^{\infty} (\Omega)} \|w \|^{1-\theta}_{W^{2,p}(\Omega)},$$
so  using $\theta=\frac{1}{2}$  we have  
\begin{align*}
\displaystyle \int_{0}^{T} \|w \|_{W^{1,2p}(\Omega)}^{2p} \mathrm{d}t &\leq C \displaystyle \int_{0}^{T}  \|w \|^{2p\theta}_{L^{\infty} (\Omega)} \|w \|^{2p(1-\theta)}_{W^{2,p}(\Omega)} \mathrm{d}t \\
& \leq C \|w \|^{p}_{L^{\infty} (\Omega_T)} \displaystyle  \int_{0}^{T}  \|w \|^{p}_{W^{2,p}(\Omega)} \mathrm{d}t \\
& \leq C \|w \|^{p}_{L^{p}((0,T),W^{2,p}(\Omega))},
\end{align*}
where we have used Lemma \ref{lem:basic}  to bound the $L^{\infty}$-norm in the right-hand side.\end{proof}

The next basic estimate is a consequence of the following lemma from \cite{DLMT}, Lemma 2.11, which is obtained by duality techniques.
\begin{lemma}[Duality estimate]\label{lem:dual}
Let $M:[0,T]\times\Omega\rightarrow\R_+$ be a positive continuous function lower bounded by a positive constant and let $r_u>0$. Any smooth nonnegative solution of the differential inequality 
\begin{align*}
\partial_t u-\Delta (M u)\leq r_u u \text{ on }\Omega,\\
\partial_n (M u)=0,\text{ on }\partial\Omega,
\end{align*}
satisfies the following bound
\begin{align*}
\iint_{\Omega_T} M u^2\leq\exp(2 r_u T)\times \left( C_\Omega^2\, \|u(0,\cdot)\|_{H^{-1}_m(\Omega)}^2+\overline{ u(0,\cdot)}^2\iint_{\Omega_T} M\right),
\end{align*}
with $\overline{ u(0,\cdot)}$ the mean value of $u(0,\cdot)$ on $\Omega$ and $C_\Omega$ the Poincar\' e-Wirtinger constant.
\end{lemma}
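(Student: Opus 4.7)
The plan is a duality argument that tests the inequality against the inverse Laplacian of the mean-zero part of $u$. For each $t\in[0,T]$, let $\psi(t,\cdot)$ denote the unique solution of
\[
-\Delta\psi(t,\cdot) = u(t,\cdot) - \overline{u(t,\cdot)} \quad\text{in }\Omega, \qquad \partial_n\psi(t,\cdot) = 0 \text{ on }\partial\Omega, \qquad \int_\Omega \psi(t,\cdot)\,\dd x = 0,
\]
and set $H(t):=\|\nabla\psi(t,\cdot)\|_{L^2(\Omega)}^2$. Combining the identity $\|\nabla\psi\|_{L^2}^2 = \int \psi(u-\overline{u})\,\dd x$ with Poincar\'e--Wirtinger yields the inequality $H(t)\leq C_\Omega^2\|u(t,\cdot) - \overline{u(t,\cdot)}\|_{H^{-1}_m(\Omega)}^2$, which is the origin of the factor $C_\Omega^2$ in the final bound. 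A preliminary ODE bound comes from integrating the differential inequality over $\Omega$ and using the Neumann condition: this gives $\dt\overline{u(t,\cdot)}\leq r_u\overline{u(t,\cdot)}$, hence $\overline{u(t,\cdot)}\leq e^{r_u t}\,\overline{u(0,\cdot)}$, which is responsible for the second term in the right-hand side.

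The heart of the proof is to test $\partial_t u - \Delta(Mu)\leq r_u u$ against $\psi(t,\cdot)$ on $\Omega$ and exploit the boundary conditions $\partial_n\psi = \partial_n(Mu) = 0$ together with $\int_\Omega \psi\,\dd x = 0$ to obtain the two key identities
\begin{align*}
-\int_\Omega \psi\,\Delta(Mu)\,\dd x &= \int_\Omega Mu\,\bigl(u-\overline{u}\bigr)\,\dd x, \\
\int_\Omega \psi\,\partial_t u\,\dd x &= \tfrac{1}{2}\dt H(t).
\end{align*}
The first is two integrations by parts. The second is obtained by differentiating $\int_\Omega \psi u\,\dd x = H(t)$ in time, using that $\partial_t\psi$ still has zero mean and solves $-\Delta \partial_t\psi = \partial_t u - \dt\overline{u}$, so that $\int_\Omega \partial_t\psi\,u\,\dd x = \int_\Omega \nabla\partial_t\psi\cdot\nabla\psi\,\dd x = \tfrac{1}{2}\dt H(t)$. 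Together with $r_u\int_\Omega \psi u\,\dd x = r_u H(t)$ and Young's inequality $2\overline{u}\int_\Omega Mu\,\dd x \leq \int_\Omega Mu^2\,\dd x + \overline{u}^2\int_\Omega M\,\dd x$ to absorb the cross term, this leads to the differential inequality
\[
\dt H(t) + \int_\Omega Mu^2\,\dd x \;\leq\; \overline{u}(t)^2\int_\Omega M\,\dd x + 2r_u H(t).
\]

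The conclusion is a Gronwall manipulation. Multiplying by $e^{-2r_u t}$ rewrites the inequality as $\dt\bigl(e^{-2r_u t}H(t)\bigr) + e^{-2r_u t}\int_\Omega Mu^2\,\dd x \leq e^{-2r_u t}\overline{u}(t)^2\int_\Omega M\,\dd x$. Integrating on $[0,T]$, discarding the non-negative boundary term $e^{-2r_u T}H(T)$, and using $\overline{u}(t)^2\leq e^{2r_u t}\overline{u(0,\cdot)}^2$ gives $\int_0^T e^{-2r_u t}\int_\Omega Mu^2\,\dd x\,\dd t \leq H(0) + \overline{u(0,\cdot)}^2\iint_{\Omega_T}M$. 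Since $e^{2r_u(T-t)}\geq 1$ for $t\in[0,T]$, multiplying through by $e^{2r_u T}$ recovers $\iint_{\Omega_T}Mu^2$ on the left, and applying $H(0)\leq C_\Omega^2\|u(0,\cdot)\|_{H^{-1}_m(\Omega)}^2$ yields exactly the stated estimate. The main technical obstacle is the dual identity $\int_\Omega \psi\,\partial_t u\,\dd x = \tfrac{1}{2}\dt H(t)$ and the need to ensure that every boundary term vanishes, both of which rely crucially on the simultaneous use of the two Neumann conditions $\partial_n\psi = 0$ and $\partial_n(Mu) = 0$.
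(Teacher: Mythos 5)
First, note that the paper does not actually prove this lemma: it is quoted verbatim from [DLMT, Lemma 2.11], so your attempt has to be judged on its own merits. Most of your computation is consistent with the stated bound: the ODE for the mean, the identities $\int_\Omega \psi\,\partial_t u\,\dd x=\tfrac12\frac{d}{dt}H(t)$ and $-\int_\Omega\psi\,\Delta(Mu)\,\dd x=\int_\Omega Mu(u-\overline u)\,\dd x$, the Young step, and the Gronwall manipulation do reproduce exactly the constants $\exp(2r_uT)$, $C_\Omega^2\|u(0,\cdot)\|_{H^{-1}_m}^2$ and $\overline{u(0,\cdot)}^2\iint_{\Omega_T}M$.

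However, there is a genuine gap at the central step: you ``test $\partial_t u-\Delta(Mu)\le r_u u$ against $\psi$'', i.e.\ you use $\int_\Omega\psi\,\bigl(\partial_t u-\Delta(Mu)-r_u u\bigr)\,\dd x\le 0$. This is only legitimate if $\psi\ge 0$, but your $\psi$ has zero mean, so it changes sign whenever $u(t,\cdot)$ is not constant. Equivalently, writing $\partial_t u-\Delta(Mu)=r_u u-\mu$ with a defect $\mu\ge 0$, your differential inequality for $H$ silently discards the term $-\int_\Omega\psi\,\mu\,\dd x$, which has no sign: if $\mu$ concentrates where $\psi<0$, this term is positive and your bound $\frac{d}{dt}H+\int_\Omega Mu^2\,\dd x\le \overline u^2\int_\Omega M\,\dd x+2r_uH$ can fail. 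The argument as written proves the lemma only when the hypothesis is an equality. Attempting to repair it by shifting the test function to $\psi+\|\psi(t,\cdot)_-\|_{L^\infty(\Omega)}\ge 0$ produces an extra term involving $\|\psi(t,\cdot)\|_{L^\infty}$, which is not controlled by the quantities in play, so the fix is not cosmetic. The standard way to handle the inequality (and, to the best of my knowledge, the route in [DLMT]) is a genuine duality argument: for nonnegative data $F$ one solves a backward-in-time dual parabolic problem whose solution is nonnegative by the maximum principle, tests the inequality against that nonnegative function, and then estimates the dual solution; the $H^{-1}_m$ norm of $u(0,\cdot)$ and the Poincar\'e--Wirtinger constant enter through the estimate of the dual solution at time $0$. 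You should either restrict your argument to the equality case and then reduce the inequality to it, or switch to the dual-problem formulation where the sign of the test function is guaranteed.
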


As a consequence, we get the following duality estimate.
\begin{corollary}[Duality estimate]
\label{dual}
Let $(u,v)$ be the solution of \eqref{eq:the system}. Then $u$ satisfies the following bound,
\begin{equation} \label{dual}
\iint_{\Omega_T} u^{2+\alpha}(t,x) \dd x \dd t \le C,
\end{equation} 
where $C$ depends only on the initial data, the domain $\Omega$, the time $T$ and the parameters $D$.
\end{corollary}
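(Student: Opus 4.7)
The plan is to apply the duality estimate of Lemma \ref{lem:dual} directly to the first equation of \eqref{eq:the system}. I set
$$M(t,x) := d_u + d_\alpha \langle u(t,x)\rangle^\alpha + d_\beta \langle v(t,x)\rangle^\beta,$$
so that the first equation reads $\partial_t u - \Delta(Mu) = u(r_u - r_a\langle u\rangle^a - r_b\langle v\rangle^b)$; since $r_a, r_b, u, v \ge 0$, this gives the differential inequality $\partial_t u - \Delta(Mu) \le r_u u$. The function $M$ is continuous (by smoothness of $u,v$ on $(0,T)\times\overline\Omega$ from Theorem \ref{thm:local}) and bounded below by $d_u>0$; moreover a direct computation using $\nabla u\cdot n = \nabla v\cdot n = 0$ on $\partial\Omega$ gives $\partial_n(Mu)=0$, so the hypotheses of Lemma \ref{lem:dual} are met.

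Applying the lemma yields
$$\iint_{\Omega_T} M u^2 \le e^{2r_u T}\Bigl(C_\Omega^2\, \|u_0\|_{H^{-1}_m(\Omega)}^2 + \overline{u_0}^2 \iint_{\Omega_T} M\Bigr).$$
The left-hand side controls the target quantity, since $\langle u\rangle \ge u$ gives
$$\iint_{\Omega_T} M u^2 \ge d_\alpha \iint_{\Omega_T} \langle u\rangle^\alpha u^2 \ge d_\alpha \iint_{\Omega_T} u^{2+\alpha}.$$
The $H^{-1}_m$ norm of $u_0$ is finite because $u_0 \in C^{2+\nu}(\overline\Omega) \subset L^\infty(\Omega)$.

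It thus remains to bound $\iint_{\Omega_T} M$. The contribution $d_u|\Omega|T$ is harmless, and the term $d_\beta \iint_{\Omega_T} \langle v\rangle^\beta$ is bounded using the $L^\infty$ estimate on $v$ from Lemma \ref{lem:basic}. The only piece requiring care is $d_\alpha \iint_{\Omega_T} \langle u\rangle^\alpha$. Here I use $\langle u\rangle^\alpha \le C(1+u^\alpha)$ together with H\"older's inequality with exponents $\tfrac{2+\alpha}{\alpha}$ and $\tfrac{2+\alpha}{2}$ to obtain
$$\iint_{\Omega_T} u^\alpha \le |\Omega_T|^{\frac{2}{2+\alpha}} \Bigl(\iint_{\Omega_T} u^{2+\alpha}\Bigr)^{\frac{\alpha}{2+\alpha}}.$$

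Setting $X := \iint_{\Omega_T} u^{2+\alpha}$, the two previous displays combine into an inequality of the form $X \le A + B X^{\alpha/(2+\alpha)}$, with $A,B$ depending only on $T$, $\Omega$, the initial data and the parameters in $D$. Since the exponent $\alpha/(2+\alpha)<1$ is strictly less than one, Young's inequality lets me absorb $BX^{\alpha/(2+\alpha)} \le \tfrac12 X + C$ and conclude $X \le 2(A+C)$, which is the desired bound. The main (and essentially only) subtlety is precisely this absorption step, which exploits the strict gap between the exponent $\alpha$ appearing on the right-hand side through $\iint M$ and the exponent $2+\alpha$ produced on the left-hand side by the duality lemma.
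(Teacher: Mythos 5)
Your proposal is correct and follows essentially the same route as the paper: apply Lemma \ref{lem:dual} with $M=d_u+d_\alpha\langle u\rangle^\alpha+d_\beta\langle v\rangle^\beta$, use the $L^\infty$ bound on $v$, and absorb the $\iint u^\alpha$ term into the $\iint u^{2+\alpha}$ term coming from the left-hand side. The only (inessential) difference is that you perform the absorption via H\"older plus Young at the level of integrals, whereas the paper uses the pointwise inequality $Cz^\alpha\le C'+\tfrac12 z^{2+\alpha}$.
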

\begin{proof}
Applying Lemma \ref{lem:dual} with $M:=d_u + d_\alpha \langle u \rangle ^\alpha + d_\beta \langle v \rangle ^\beta$ yields the estimate
\begin{equation}
\iint_{\Omega_T} \left( u^2 + u^{2+\alpha} + u^2v^\beta \right) (t,x) \dd x \dd t \le C \left( 1 + \iint_{\Omega_T} \left( u^{\alpha} + v^\beta \right) (t,x) \dd x \dd t \right)
\end{equation}
where we used the fact that $u^\alpha \le \langle u\rangle^\alpha \leq C (1+u^\alpha)$. Using now the
$L^\infty$-bound of $v$ from Lemma \ref{lem:basic} and the fact that $C z^{\alpha} \le C' + \frac{1}2 z^{2+\alpha}$ for all $z\ge 0$, we conclude the proof.
\end{proof}

\subsection{Transfer of integrability from $\nabla_x v$ to $u$}  
 
\begin{lemma}[Power energy estimate for $u$]
\label{power energ u}
Let $\rho\geq \max(1,\alpha)$ and $p>2$. Considering the system of cross-diffusion equations given by \eqref{eq:the system} with $\alpha>0$ and assuming that $\nabla v \in L^p(\Omega_{T})$ and $u\in L^{\rho}(\Omega_T)\cap L^{\frac{p}{p-2}(\rho-\alpha)}(\Omega_T)$, the following energy estimate holds true for all $s <t \in \left[0,T\right]$,
\begin{align} \label{power energy est for u} &\int_{\Omega} u^{\rho}(t,x) \dd x  +  \iint_{(s,t)\times\Omega}\vert \nabla u^{\frac{\rho}{2}+\frac{\alpha}{2}} \vert^2(\tau,x) \dd x \dd \tau  +  \iint_{(s,t)\times \Omega}  u^{a+\rho}(\tau,x) \dd x \dd \tau
  \\ \lesssim&  \int_{\Omega} u^{\rho}(s,x) \dd x + \left( \iint_{(s,t)\times\Omega} |\nabla v|^{p} (\tau,x) \dd x \dd\tau  \right)^{\frac{2}{p}}   \left( \iint_{(s,t)\times\Omega} u^{\frac{p}{p-2}(\rho-\alpha)} (\tau,x) \dd x \dd \tau  \right)^{\frac{p-2}{p}} +1 \notag,
\end{align} 
where the constant depends on the initial data, $D$, $\Omega$, $T$ and $\rho$.  
\end{lemma}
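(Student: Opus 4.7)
The plan is to multiply the first equation of \eqref{eq:the system} by $u^{\rho-1}$ and integrate over $\Omega$. Setting $M := d_u + d_\alpha \langle u\rangle^\alpha + d_\beta \langle v\rangle^\beta$, and using the Neumann conditions $\nabla u\cdot n = \nabla v \cdot n = 0$ (which imply $\nabla M\cdot n = 0$ and hence $\nabla(Mu)\cdot n = 0$), two integrations by parts give
\[
\frac{1}{\rho}\frac{d}{dt}\int_\Omega u^\rho\dd x + (\rho-1)\int_\Omega u^{\rho-2} M|\nabla u|^2\dd x + (\rho-1)\int_\Omega u^{\rho-1}\nabla u \cdot \nabla M\dd x = \int_\Omega u^\rho\bigl(r_u - r_a\langle u\rangle^a - r_b\langle v\rangle^b\bigr)\dd x.
\]

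Expanding $\nabla M = d_\alpha\alpha\langle u\rangle^{\alpha-2} u\nabla u + d_\beta\beta\langle v\rangle^{\beta-2} v\nabla v$, the first piece of this expansion is nonnegative and can be discarded; the lower bound $M \ge d_\alpha u^\alpha$ combined with the identity $u^{\rho+\alpha-2}|\nabla u|^2 = \tfrac{4}{(\rho+\alpha)^2}|\nabla u^{(\rho+\alpha)/2}|^2$ produces the gradient dissipation term of the statement. For the reaction, $\langle u\rangle^a\ge u^a$ yields the negative contribution $-r_a\int u^{\rho+a}$ on the left-hand side, the perturbative $r_u\int u^\rho$ is absorbed via the pointwise Young bound $r_u u^\rho \le C + \tfrac{r_a}{2} u^{\rho+a}$, and the term $-r_b \int u^\rho \langle v\rangle^b \le 0$ is simply discarded.

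The crucial step is the cross-diffusion piece: using the $L^\infty$-bound on $v$ from Lemma \ref{lem:basic} to control $|\langle v\rangle^{\beta-2}v| \le C$ pointwise, I reduce to
\[
\Bigl|(\rho-1) d_\beta\beta \int_\Omega u^{\rho-1}\langle v\rangle^{\beta-2} v\,\nabla u\cdot\nabla v\dd x\Bigr| \le C\int_\Omega u^{\rho-1}|\nabla u|\,|\nabla v|\dd x.
\]
The key algebraic observation is the factorization $u^{\rho-1}|\nabla u| = \tfrac{2}{\rho+\alpha}\,u^{(\rho-\alpha)/2}\,|\nabla u^{(\rho+\alpha)/2}|$, valid since $\rho\ge\alpha\ge 0$; Young's inequality then gives the bound $\epsilon|\nabla u^{(\rho+\alpha)/2}|^2 + C_\epsilon u^{\rho-\alpha}|\nabla v|^2$. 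Absorbing the $\epsilon$-term into the gradient dissipation on the left and integrating over $(s,t)$, I apply Hölder in $(\tau,x)$ with conjugate exponents $p/2$ and $p/(p-2)$ on the resulting $\iint u^{\rho-\alpha}|\nabla v|^2$, which produces exactly the right-hand side of \eqref{power energy est for u}.

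The main obstacle I anticipate is obtaining the correct exponent $u^{\rho-\alpha}$ rather than the $u^{\rho-\alpha+2}$ that a naive symmetric Young splitting would yield: it is essential here that the test function is $u^{\rho-1}$ rather than $u^\rho$, so that exactly two powers of $u$ cancel when pairing $u^{\rho-1}|\nabla u|$ against $|\nabla u^{(\rho+\alpha)/2}|$. This sharp balancing is precisely what allows the integrability assumption $u\in L^{\frac{p}{p-2}(\rho-\alpha)}(\Omega_T)$ to close the estimate.
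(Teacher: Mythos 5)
Your proposal is correct and follows essentially the same route as the paper: test the $u$-equation with $u^{\rho-1}$, extract the dissipation $\iint|\nabla u^{\frac{\rho+\alpha}{2}}|^2$ from the self-diffusion, absorb the reaction via the pointwise bound $r_u z^\rho \le C + \tfrac{r_a}{2}z^{\rho+a}$, and control the cross-diffusion term by Young's inequality (absorbing $\epsilon|\nabla u^{\frac{\rho+\alpha}{2}}|^2$) followed by H\"older with exponents $p/2$ and $p/(p-2)$ on $\iint u^{\rho-\alpha}|\nabla v|^2$, which is exactly the paper's computation in \eqref{simpl}--\eqref{young holder}. The only difference is presentational (you expand $\nabla(Mu)$ and use the factorization $u^{\rho-1}|\nabla u|=\tfrac{2}{\rho+\alpha}u^{\frac{\rho-\alpha}{2}}|\nabla u^{\frac{\rho+\alpha}{2}}|$, whereas the paper writes the terms directly with $\nabla u^{\rho/2}$ and $\nabla u^{\frac{\rho+\alpha}{2}}$), so no substantive gap.
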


\begin{proof}
We multiply the equation for $u$ by $u^{\rho-1}$ and integrate on $(s,t)\times \Omega$
  \begin{align*}
&\iint_{(s,t)\times \Omega} \frac{\partial_\tau u^{\rho}}{\rho} \dd x\dd \tau
  + 2\frac{\rho-1}{\rho}  \iint_{(s,t)\times \Omega} \beta d_{\beta} \frac{v}{\langle v \rangle ^{2-\beta}}u^{\frac{\rho}{2}} \nabla v\cdot \nabla u^{\frac{\rho}{2}} \dd x\dd \tau \\
&+ 4\frac{\rho-1}{\rho^2} \iint_{(s,t)\times \Omega} \left( \left(d_u+d_{\alpha}\left(1+\alpha\frac{ u^2}{\langle u \rangle ^2}\right)\langle u \rangle ^{\alpha} + d_{\beta}\langle v \rangle ^{\beta} \right)  \vert \nabla u^{\frac{\rho}{2}}\vert^2  \right) \dd x\dd \tau  \notag \\
&= \iint_{(s,t)\times \Omega} u^{\rho} (r_u-r_a\langle u\rangle^a-r_b\langle v\rangle^b)\dd x\dd \tau.
\end{align*}
Using the fact that $z^\rho(r_u-\frac{r_a}{2}z^a)\le C$ for all $z\ge0$ we get
  \begin{equation}\label{simpl}
  \begin{split} 
\int_{\Omega} u^{\rho}(t,x) \dd x&+ C_1 \iint_{(s,t)\times \Omega} \vert \nabla u^{\frac{\rho}{2}+\frac{\alpha}{2}}\vert^2   \dd x\dd \tau  + 
C_2 \iint_{(s,t)\times \Omega} \frac{v}{\langle v \rangle ^{2-\beta}}u^{\frac{\rho}{2}-\frac{\alpha}{2}} \nabla v\cdot \nabla u^{\frac{\rho}{2}+\frac{\alpha}{2}} \dd x\dd \tau \\ 
&+
\frac{r_a}{2} \iint_{(s,t)\times \Omega}  u^{a+\rho}(\tau,x) \dd x \dd \tau  
\\ &\leq 
\iint_{(s,t)\times \Omega} u^{\rho} \left( r_u-\frac{r_a}{2}\langle u\rangle^a-r_b\langle v\rangle^b \right) \dd x\dd t +  \int_{\Omega}  u^{\rho}(s,x) \dd x 
\\ &\leq  \int_{\Omega}  u^{\rho}(s,x) \dd x + C. 
\end{split}
\end{equation}
Applying Young's inequality and then a H\"older inequality to the third term of the left hand side yields
\begin{align}
\label{young holder}
 \iint_{(s,t)\times \Omega} \frac{v}{\langle v \rangle ^{2-\beta}}u^{\frac{\rho}{2}-\frac{\alpha}{2}} \nabla v\cdot \nabla u^{\frac{\rho}{2}+\frac{\alpha}{2}} \dd x\dd \tau \\
\leq \frac12  \iint_{(s,t)\times \Omega} |\nabla u ^{\frac{\rho}{2}+\frac{\alpha}{2}}|^2 \dd x\dd \tau + \frac12 \iint_{(s,t)\times \Omega} \frac{v^2}{\langle v \rangle ^{4-2\beta}}  u^{\rho-\alpha}  |\nabla v|^2 \dd x\dd \tau \notag\\
\leq \frac12 \iint_{(s,t)\times \Omega} |\nabla u ^{\frac{\rho}{2}+\frac{\alpha}{2}}|^2 \dd x\dd \tau + \notag
\\
 + C (1+\max v^{2(\beta-1)_+}) \left(\iint_{(s,t)\times \Omega} u^{\frac{p}{p-2}(\rho-\alpha)} \dd x\dd \tau \right)^{\frac{p-2}{p}} \left(\iint_{(s,t)\times \Omega} |\nabla v|^p \dd x\dd \tau \right)^{\frac{2}{p}}. \notag
\end{align}

 Combining this with \eqref{simpl} gives the result. 
 \end{proof}

 
  We finally have the following bound on the gradient of $v$ thanks to our assumptions on the parameters.

\begin{proposition}[Integrability of $\nabla v$] \label{3.4 in Tao_Winkler} 
Let $(u,v)$ be a solution of \eqref{eq:the system}.
Suppose \eqref{def cst}--\eqref{assump cst}, in particular that  $2 d < \max\{ 1+a, 2+ \alpha, a+\alpha, 2\alpha+2/m\}.$   Then $$\| \nabla v \|_{L^4 (\Omega_{T})} \le C$$
where $C$ depends only on the initial data, the domain $\Omega$, the time $T$ and the parameters $D$.

Furthermore there exists $\rho>1$ such that
\begin{equation} \label{eq: from Tao Winkler}
\int_0^T \int_{\Omega} \left\vert \nabla u^{\frac{\rho+\alpha}{2}} \right\vert^2(s,x) \dd x \dd s \leq C(T).
\end{equation}
\end{proposition}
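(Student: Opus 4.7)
The plan is to close both bounds by a bootstrap between integrability of $u$ and Sobolev regularity of $v$, exploiting the fact that $v\in L^\infty(\Omega_T)$ by Lemma \ref{lem:basic}, which makes the map $v\mapsto w=(d_v+d_\gamma\langle v\rangle^\gamma)v$ bi-Lipschitz on the range of $v$. In particular $\|\nabla v\|_{L^p}$ and $\|\nabla w\|_{L^p}$ are equivalent for every $p$, and the $v$-equation rewrites
\begin{equation*}
\partial_t v-\Delta w=v\bigl(r_v-r_c\langle v\rangle^c-r_d\langle u\rangle^d\bigr)
\end{equation*}
with homogeneous Neumann condition on $w$; viewed as a quasilinear parabolic equation for $w$, it is uniformly parabolic since $\partial_v w\ge d_v>0$.

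The key step is that once $u\in L^{2d}(\Omega_T)$ is established, the right-hand side of this equation lies in $L^2(\Omega_T)$ (since $v$ is bounded), so Ladyzhenskaya's $W^{1,2}_2$ parabolic regularity yields $w\in L^2((0,T),W^{2,2}(\Omega))$, and Lemma \ref{lem:interpolation} with $p=2$ upgrades this to $w\in L^4((0,T),W^{1,4}(\Omega))$, hence $\|\nabla v\|_{L^4(\Omega_T)}\le C$. Estimate \eqref{eq: from Tao Winkler} then follows as a direct byproduct of a last application of Lemma \ref{power energ u} with $\rho=\max(1,\alpha)>1$ and $p=4$, using this $L^4$ control of $\nabla v$. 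To secure $u\in L^{2d}(\Omega_T)$ in the first place, I treat separately the four alternatives in \eqref{assump cst}. The cases $2d<1+a$ and $2d<2+\alpha$ are immediate from Lemma \ref{lem:basic} and the duality bound \eqref{dual}. For $2d<a+\alpha$, I apply Lemma \ref{power energ u} with $\rho=\max(1,\alpha)$ and $p$ large: the factor $u^{\frac{p}{p-2}(\rho-\alpha)}$ then has exponent close to $\rho-\alpha\le 1$, controlled by $L^{1+a}\cap L^{2+\alpha}$, and the output includes $u\in L^{a+\rho}\supset L^{2d}$. For $2d<2\alpha+2/m$, I combine the $L^2(\Omega_T)$ bound on $\nabla u^{(\rho+\alpha)/2}$ with the $L^\infty_tL^\rho_x$ bound from Lemma \ref{power energ u} and a parabolic Gagliardo–Nirenberg interpolation to upgrade to $u\in L^{2d}(\Omega_T)$, iterating if necessary.

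The main obstacle is an apparent circularity: Lemma \ref{power energ u} requires $\nabla v\in L^p$ for some $p>2$ before it can improve the integrability of $u$. This is bypassed by a preliminary bootstrap starting from a test-function estimate on the $v$-equation: multiplying by $w$ and integrating yields $\nabla w\in L^2(\Omega_T)$ since the reaction term is controlled by $u\in L^{1+a}$ and $v\in L^\infty$; iterated applications of Lemma \ref{lem:interpolation} together with $L^p$ parabolic regularity for the $w$-equation, fed by the successively improved integrability of the right-hand side, produce a preliminary bound $\nabla v\in L^{p_0}(\Omega_T)$ for some $p_0>2$, which serves as the seed of the main bootstrap described above.
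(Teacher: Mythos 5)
Your scheme is sequential (first prove $u\in L^{2d}(\Omega_T)$, then deduce $\Delta w\in L^2$, then interpolate to get $\nabla v\in L^4$), and it breaks down exactly in the two alternatives of \eqref{assump cst} that make the proposition non-trivial, namely $2d<a+\alpha$ and $2d<2\alpha+2/m$ when these exceed $\max(1+a,2+\alpha)$. In those cases $u\in L^{2d}$ is not available from Lemma \ref{lem:basic} or Corollary \ref{dual}, and the two tools you invoke to produce it are both circular at this stage. First, your ``preliminary bootstrap'' cannot get off the ground: from testing the $v$-equation you only get $\nabla w\in L^2(\Omega_T)$, and to improve this you would need either Lemma \ref{lem:interpolation} (which requires a $W^{2,p}$ bound on $w$, i.e.\ the right-hand side $v\,\langle u\rangle^d$ in $L^p$ with $p\ge 2$ — precisely the missing $u\in L^{2d}$) or $L^q$ maximal regularity for \eqref{eq in w} with $q\neq 2$; but Lemma \ref{W12 estimates} requires the diffusion coefficient, a function of $v$, to be \emph{continuous}, and continuity of $v$ is only obtained later in the paper (Lemma \ref{u grad v any Lp space}) as a consequence of the very $L^4$ bound you are trying to prove. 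Second, your treatment of the case $2d<a+\alpha$ applies Lemma \ref{power energ u} ``with $p$ large'', i.e.\ it presupposes $\nabla v\in L^p(\Omega_T)$ for large $p$, which is the conclusion of the whole argument, not a hypothesis; the same objection applies to your use of Lemma \ref{power energ u} in the case $2d<2\alpha+2/m$. (A minor point: for \eqref{eq: from Tao Winkler} you take $\rho=\max(1,\alpha)$, which is not $>1$ when $\alpha\le 1$; this is easily fixed, unlike the issues above.)

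The paper resolves the circularity not by a bootstrap but by a single \emph{coupled} energy estimate: it adds the $\rho$-energy inequality for $u$ (which produces $\iint u^{\rho-\alpha}|\nabla v|^2$, split by Young into $\eta\iint u^{2\rho-2\alpha}+\tfrac{1}{4\eta}\iint|\nabla v|^4$) to $\tfrac1\eta$ times the inequality obtained by testing \eqref{eq in w} with $\Delta w$. By Lemma \ref{lem:interpolation} the term $\iint|\nabla v|^4$ is dominated by $\iint|\Delta w|^2$, which sits on the left-hand side of the combined inequality and is therefore absorbed, leaving only $\eta\iint u^{2\rho-2\alpha}$ and $\tfrac1\eta\iint u^{2d}$ as bad terms. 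These are then handled by a case-dependent choice of $\rho>1$ with $2d\le 2\rho-2\alpha$: if $\max$ in \eqref{assump cst} is $1+a$ or $2+\alpha$ the term is already integrable by Lemma \ref{lem:basic} and Corollary \ref{dual} (this is essentially the regime where your argument would also work); if it is $a+\alpha$ one takes $\rho=2\alpha+a$ so that $u^{2\rho-2\alpha}=u^{a+\rho}$ is absorbed by the left-hand side for $\eta$ small; if it is $2\alpha+2/m$ one takes $\rho=3\alpha+2/m$ and controls $\iint u^{2\rho-2\alpha}$ by Gagliardo--Nirenberg using only the mass bound and the term $\iint|\nabla u^{\frac{\rho+\alpha}{2}}|^2$ already present on the same left-hand side, again absorbing with $\eta$ small. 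No continuity of $v$ and no $L^q$ maximal regularity is needed at this stage, which is the whole point of the coupled argument.
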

\begin{proof} 
We first prove that $\| \nabla v \|_{L^2 (\Omega_{T})} \le C$. By multiplying the equation of $v$ in \eqref{eq:the system} by $v$ and integrating on $\Omega_T$, one gets
\begin{align}
 \int_\Omega \frac{v^2}2 (T,x) \dd x+ \iint_{\Omega_T} \left(d_v+d_\gamma \left(1+\gamma\frac{v^2}{\langle v \rangle^2} \right)\langle v \rangle ^\gamma \right)|\nabla v|^2 \dd x \dd s \notag\\
\leq \int_\Omega\frac{v^2}2 (0,x) \dd x + \iint_{\Omega_T} v\, (r_v - r_c \langle v\rangle^c-r_d \langle u\rangle^d) \dd x \dd s \notag\\
\leq \int_\Omega \frac{v^2}2 (0,x) \dd x + C.
\end{align}


Then, we recall that from Lemma \ref{power energ u} we have 
\begin{align} \label{estim on u}
 \int_{\Omega} u^{\rho}(T,x) \dd x  + & \iint_{\Omega_T} \vert \nabla u^{\frac{\rho}{2}+\frac{\alpha}{2}} \vert^2(t,x) \dd x \dd t +  \iint_{ \Omega_T}  u^{a+\rho}(t,x) \dd  x \dd t 
    \\ \lesssim&   \int_{\Omega} u^{\rho}(0,x) \dd x+   \iint_{\Omega_T} u^{\rho-\alpha}  |\nabla v|^{2} (t,x) \dd x  \dd t  + \iint_{\Omega_T} u^{\rho} \dd x \dd t \notag.
\end{align} 
For the first term in the right-hand side we apply Young's inequality with a constant $\eta$, to get 
\begin{equation} \label{Youngs with eta}
 \iint_{\Omega_T} u^{\rho-\alpha}  |\nabla v|^{2} (t,x) \dd x \dd t  \leq \eta  \iint_{\Omega_T}  u^{2\rho- 2\alpha}(t,x) \dd x \dd t + \frac{1}{4 \eta}   \iint_{\Omega_T} |\nabla v|^{4} (t,x)\dd x \dd t .
\end{equation}

We recall  $w:=(d_v+d_{\gamma}\langle v \rangle ^\gamma)v$. From Lemma \ref{lem:basic} we have that $w$ satisfies a bound in $L^\infty$.
Furthermore, by integration by part and using the Neumann boundary condition, we know that 
$$\|\Delta w\|_{L^{2}((0,T)\times\Omega)}^2= \sum\limits_{\begin{array}{c}
i=1,\dots,m\\
j=1,\dots,m
\end{array}} \left\|\frac{\partial^2 w}{\partial_{x_i} \partial_{x_j}}\right\|_{L^{2}(\Omega_T)}^2.$$ By the interpolation from Lemma \ref{lem:interpolation} applied for $p=2$, i.e. between $L^\infty$ and $H^2$, we can write
\begin{equation}\label{interpgv}
\iint_{\Omega_T} | \nabla v|^4(t,x) \dd x \dd t  \lesssim \iint_{\Omega_T} | \Delta w|^2(t,x)\dd x \dd t.
\end{equation}
Therefore \eqref{estim on u} together with \eqref{Youngs with eta} gives,
\begin{equation} \label{estim on u 2}
\begin{split}
 \int_{\Omega} u^{\rho}(T,x) \dd x  &+  \iint_{\Omega_T}\vert \nabla u^{\frac{\rho}{2}+\frac{\alpha}{2}} \vert^2(t,x) \dd x \dd t  +  \iint_{ \Omega_T}  u^{a+\rho}(t,x) \dd  x \dd t
    \\ \lesssim& \int_{\Omega} u^{\rho}(0,x) \dd x + \eta \iint_{\Omega_T}  u^{2\rho- 2\alpha}(t,x) \dd x \dd t  + \frac{1}{4 \eta}  \iint_{\Omega_T} |\Delta w|^{2} (t,x)\dd x \dd t \\
    &\qquad + \iint_{\Omega_T} u^{\rho} (t,x)\dd x \dd t.
    \end{split}
\end{equation}

Moreover $w$ satisfies the following equation
\begin{align}
\label{eq in w}
\partial_t w - \left(d_u+d_{\gamma}\langle v \rangle ^{\gamma}\left(1+ \frac{\gamma v^2}{\langle v \rangle ^2}\right)\right)\Delta w 
= \left(d_u+d_{\gamma}\langle v \rangle ^{\gamma}\left(1+ \frac{\gamma v^2}{\langle v \rangle ^2}\right)\right) v(r_v-r_c \langle v\rangle^c-r_d \langle u\rangle^d),
\end{align}
and also $\nabla w \cdot n =0$ on $\partial \Omega$ and $w(0,x)=(d_v+d_{\gamma}\langle v_0 \rangle ^\gamma)v_0$. The estimates we get for $\nabla w$ then are directly transferred to $\nabla v$ through their relation $\nabla v = \left(d_u+d_{\gamma}\langle v \rangle ^{\gamma}\left(1+ \frac{\gamma v^2}{\langle v \rangle ^2}\right)\right)^{-1} \nabla w$.

By multiplying \eqref{eq in w} by $\Delta w$ and integrating in $\Omega_T$ we obtain,
\begin{equation} 
\begin{split}
&\displaystyle \int_{\Omega} \frac{|\nabla w |^2}{2}(T,x) \mathrm{d}x + \displaystyle \iint_{\Omega_T} \left(d_u+d_{\gamma}\langle v \rangle ^{\gamma}+\frac{\gamma d_{\gamma}v^2}{\langle v \rangle ^{2-\gamma}}\right)|\Delta w|^2 \mathrm{d}t \mathrm{d}x\\
&\leq \displaystyle \int_{\Omega} \frac{|\nabla w |^2}{2}(0,x) \mathrm{d}x+  \displaystyle \iint_{\Omega_T} \left(d_u+d_{\gamma}\langle v \rangle ^{\gamma}+\frac{\gamma d_{\gamma}v^2}{\langle v \rangle ^{2-\gamma}}\right)v|r_v-r_c \langle v\rangle^c-r_d \langle u\rangle^d||\Delta w| \mathrm{d}t \mathrm{d}x.
\end{split}
\end{equation}
We use next Young's inequality for the last integral, the fact that $v$ is upper bounded from Lemma \ref{lem:basic} and the inequality $\langle u\rangle^{2d} \leq C(1+u^{2d})$ for some $C$, to get 
\begin{equation} \label{estim on grad w}
\begin{split}
\displaystyle \int_{\Omega} \frac{|\nabla w |^2}{2}(T,x) \mathrm{d}x + \displaystyle \iint_{\Omega_T} \left(d_u+d_{\gamma}\langle v \rangle ^{\gamma}+\frac{\gamma d_{\gamma}v^2}{\langle v \rangle ^{2-\gamma}}\right)|\Delta w|^2 \mathrm{d}t \mathrm{d}x\\
\leq \displaystyle \int_{\Omega} \frac{|\nabla w |^2}{2}(0,x) \mathrm{d}x+  \displaystyle C\iint_{\Omega_T}\left(1+u^{2d}\right)(t,x) \mathrm{d}t \mathrm{d}x,
\end{split}
\end{equation}




Gathering therefore the relations \eqref{estim on u}, \eqref{Youngs with eta} and \eqref{estim on grad w} together and neglecting the (time-independent) constants, one has the following inequality
\begin{equation}
\begin{split}
  \int_{\Omega} u^{\rho}(T,x) \dd x + \frac1\eta \int_{\Omega} \frac{|\nabla w |^2}{2}(T,x) \mathrm{d}x &+ \iint_{\Omega_T}\vert \nabla u^{\frac{\rho}{2}+\frac{\alpha}{2}} \vert^2(t,x) \dd x  \dd t
+  \iint_{ \Omega_T}  u^{a+\rho}(t,x) \dd x \dd t
  \\ &+  \frac{1}{\eta} 
 \iint_{\Omega_T} \left(d_u+d_{\gamma}\langle v \rangle^{\gamma} +
 \frac{\gamma d_{\gamma}v^2}{\langle v \rangle ^{2-\gamma}}\right)|\Delta w|^2 \mathrm{d}x \dd t
 \\ &\lesssim
  \eta \iint_{\Omega_T}  u^{2\rho- 2\alpha}(t,x) \dd x\dd t +  \frac1\eta \iint_{\Omega_T}\left(1+u^{2d}\right) \mathrm{d}x \dd t
  \\&+ 
 \iint_{\Omega_T} u^{\rho} \mathrm{d}x \dd t+  \int_{\Omega} u^{\rho}(0,x) \dd x + \frac{1}{\eta} \int_{\Omega} \frac{|\nabla w |^2}{2}(0,x) \mathrm{d}x.
\end{split}
\end{equation}
We continue by considering three different cases regarding the values of $a, \alpha,m$ and defining an appropriate $\rho >1$ in each case that satisfies $\rho > 2\alpha$ and $d < \rho-\alpha$ as well.  We then estimate the above right-hand side by
 \begin{equation}
\begin{split}
 \eta  \iint_{\Omega_T}  u^{2\rho- 2\alpha}(t,x) \dd x \dd t +  \frac1\eta  \iint_{\Omega_T} &\left(1+u^{2d}\right) \mathrm{d}x  \dd t +  \iint_{\Omega_T} u^{\rho}(t,x) \dd x \dd t + \\
 & +  \int_{\Omega} u^{\rho}(0,x) \dd x + \frac{1}{\eta} \int_{\Omega} \frac{|\nabla w |^2}{2}(0,x) \mathrm{d}x\\ 
&\lesssim \left( \eta  \iint_{\Omega_T}  u^{2\rho- 2\alpha}(t,x) \dd x \dd t + C_\eta \right)
\end{split}
\end{equation}
for some positive constant $C_\eta$.

{\bf{Case 1: $\max(1+a,2+\alpha,a+\alpha,4\alpha+4/m)=\max(1+a,2+\alpha)$.}}

Let $\rho:=\alpha + \max(1+a,2+\alpha)/2>1$, so that $2 \rho - 2 \alpha \le \max(1+a,2+\alpha)$. Then by equation \eqref{dual}, we have that $u^{2\rho-2\alpha}$ is in $L^1(\Omega_T)$. Therefore, after an integration in time, taking $\eta=1$ and using the initial regularity,
\begin{equation}\label{magical_energy}
\begin{split}
\int_{\Omega} u^{\rho}(T,x) \dd x + \int_{\Omega} \frac{|\nabla w |^2}{2}(T,x) \mathrm{d}x &+ \int_0^T\int_{\Omega}\vert \nabla u^{\frac{\rho}{2}+\frac{\alpha}{2}} \vert^2(t,x) \dd x \dd t
+ \int_0^T \int_{ \Omega}  u^{a+\rho}(t,x) \dd x \dd t
  \\ &+ 
 \int_0^T\int_{\Omega} \left(d_u+d_{\gamma}\langle v \rangle ^{\gamma} +
 \frac{\gamma d_{\gamma}v^2}{\langle v \rangle ^{2-\gamma}}\right)|\Delta w|^2 \mathrm{d}x \dd t \le
C(T).
\end{split}
\end{equation}

{\bf{Case 2: $\max(1+a,2+\alpha,a+\alpha,4\alpha+4/m)=a+\alpha$.}}
Let $\rho:= 2 \alpha +a>1$, and therefore, $2 \rho - 2 \alpha = \rho +a $, and then, taking $\eta$ small enough, one recovers the same bound as in \eqref{magical_energy}.

{\bf{Case 3: $\max(1+a,2+\alpha,a+\alpha)<4\alpha+4/m$.}}
Let $\rho:=3\alpha +2/m$. Then $\rho>1$ since
\begin{equation*}
\rho=3\alpha +2/m = 4\alpha+4/m - \alpha -2/m > 2+ \alpha - \alpha -2/m = 2-2/m \ge 1.
\end{equation*}

Then in order to bound the right-hand side, we apply Gagliardo-Nirenberg inequality for $1\leq q=\frac{4(\rho-\alpha)}{\rho+\alpha}$: 
\begin{equation}
\begin{split}
 \int_{\Omega} u^{2\rho-2\alpha}(t,x) \mathrm{d} x &=
\|u^{\frac{\rho+\alpha}{2}}(t,.)\|_{L^q(\Omega)}^q \\ 
&\leq 
C \|\nabla u^{\frac{\rho+\alpha}{2}} (t,.)\|_{L^2(\Omega)}^{2}
 \| u^{\frac{\rho+\alpha}{2}}(t,.)\|_{L^{\frac{2}{\rho+\alpha}}}^{q-2}+ 
C\|u^{\frac{\rho+\alpha}{2}}(t,.) \|_{L^{\frac{2}{\rho+\alpha}}(\Omega)}^q.
\end{split}
\end{equation}
for a constant $C$. One then has that
\begin{equation}
\begin{split} 
\|u^{\frac{\rho+\alpha}{2}}(t,.) \|_{L^{\frac{2}{\rho+\alpha}}(\Omega)} &= \int_{\Omega} u\ \mathrm{d}x \leq C\ \text{ for all } t \in (0,T)
\end{split}
\end{equation}
from Lemma \ref{lem:basic}. We integrate the Gagliardo-Nirenberg inequality in time and we choose $\eta$ small enough so that we can absorb the term $\iint_{\Omega_T}\vert \nabla u^{\frac{\rho}{2}+\frac{\alpha}{2}} \vert^2(t,x) \dd x \dd t $ in the left-hand side. Then the inequality in \eqref{magical_energy} holds.

Combining \eqref{magical_energy} and \eqref{interpgv} concludes the proof.

\end{proof}

 We continue by observing that the integrability of $\nabla v$ allows us to obtain integrability for powers of $u$.

 \begin{corollary}
\label{gain integ for grad v implies u}
Let $D$ satisfy \eqref{def cst}--\eqref{assump cst} and let $u, v$ be solutions of \eqref{eq:the system}. If $\nabla v \in L^p(\Omega_{T})$ with $p>2$ then $u^r\in L^1(\Omega_{T})$ for all $r \in \left(0,\max\left\{\frac{p \alpha (m+1)}{(m+2-p)_+}, \frac{p}{2}\alpha + \frac{p-2}{2}a\right\} \right)$ where $m$ is the space dimension. 
\end{corollary}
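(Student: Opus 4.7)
The plan is to extract from Lemma \ref{power energ u} two complementary bounds, one by a direct Young absorption (which yields the threshold $\tfrac{p}{2}\alpha+\tfrac{p-2}{2}a$) and one by a Gagliardo--Nirenberg bootstrap (which yields the threshold $\tfrac{p\alpha(m+1)}{(m+2-p)_+}$), and then to take the better of the two.

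\emph{Absorption branch.} Pick $\rho$ satisfying $\max(1,\alpha)\le\rho\le\tfrac{p\alpha+(p-2)a}{2}$, so that the exponent $q:=\tfrac{p}{p-2}(\rho-\alpha)$ appearing on the right-hand side of Lemma \ref{power energ u} satisfies $q\le\rho+a$. H\"older's inequality on the bounded cylinder $\Omega_T$ then controls $\bigl(\iint u^q\bigr)^{(p-2)/p}$ by a power strictly smaller than one of $\iint u^{\rho+a}$, and Young's inequality absorbs it into the third term of the left-hand side of \eqref{power energy est for u}. After absorption the remaining inequality reads $\sup_t\|u(t,\cdot)\|_{L^\rho(\Omega)}^\rho\le C$, so in particular $u\in L^\rho(\Omega_T)$. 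Letting $\rho$ approach the threshold yields $u^r\in L^1(\Omega_T)$ for every $r<\tfrac{p}{2}\alpha+\tfrac{p-2}{2}a$.

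\emph{Iteration branch.} We bootstrap a scalar sequence $(R_k)$ of exponents for which $u\in L^{R_k}(\Omega_T)$, initializing with $R_0=\max(1+a,\,2+\alpha)$ from Lemma \ref{lem:basic} and Corollary \ref{dual}. Assuming $u\in L^{R_k}(\Omega_T)$, we choose $\rho_{k+1}$ maximal subject to $\tfrac{p}{p-2}(\rho_{k+1}-\alpha)\le R_k$, namely $\rho_{k+1}=\alpha+\tfrac{p-2}{p}R_k$. Lemma \ref{power energ u} then yields $\sup_t\|u(t,\cdot)\|_{L^{\rho_{k+1}}(\Omega)}\le C$ together with $\nabla u^{(\rho_{k+1}+\alpha)/2}\in L^2(\Omega_T)$. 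A pointwise-in-time Gagliardo--Nirenberg inequality applied to $f:=u^{(\rho_{k+1}+\alpha)/2}$, which belongs to $L^\infty_t L^{2\rho_{k+1}/(\rho_{k+1}+\alpha)}_x\cap L^2_tH^1_x$, integrated in time with the optimal parameter $\theta=2/q_1$, produces $f\in L^{q_1}(\Omega_T)$ with $q_1=2+\tfrac{4\rho_{k+1}}{m(\rho_{k+1}+\alpha)}$. Translated back to $u$ this reads $u\in L^{R_{k+1}}(\Omega_T)$ with $R_{k+1}=\tfrac{m+2}{m}\rho_{k+1}+\alpha$.

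Substituting one relation into the other produces the affine recursion $R_{k+1}=\lambda R_k+c$ with $\lambda=\tfrac{(m+2)(p-2)}{mp}$ and $c=\tfrac{2(m+1)\alpha}{m}$. When $p<m+2$ we have $\lambda<1$, the sequence is contractive toward the fixed point $R^\ast=\tfrac{c}{1-\lambda}=\tfrac{p\alpha(m+1)}{m+2-p}$, and for any $r<R^\ast$ finitely many iterations suffice to reach $R_k>r$, whence $u^r\in L^1(\Omega_T)$. When $p\ge m+2$ we have $\lambda\ge1$, the sequence is unbounded, and the argument furnishes $u^r\in L^1(\Omega_T)$ for every $r>0$, consistently with the convention $(m+2-p)_+=0$. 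Taking the maximum of the two branches closes the proof. The principal technical care is to verify at each iteration that the hypotheses of Lemma \ref{power energ u} hold, in particular that $\rho_{k+1}\ge\max(1,\alpha)$ (automatic once $R_0$ is sufficiently large) and that $u\in L^{\rho_{k+1}}\cap L^{p(\rho_{k+1}-\alpha)/(p-2)}(\Omega_T)$ indeed follows from the induction hypothesis $u\in L^{R_k}(\Omega_T)$; everything else is routine manipulation.
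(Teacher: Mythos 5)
Your proposal is correct and follows essentially the same route as the paper: the ``absorption branch'' is exactly the paper's treatment of the range $r<\tfrac{p}{2}\alpha+\tfrac{p-2}{2}a$ (Lemma \ref{power energ u} with Young absorption into the $u^{a+\rho}$ term), and the ``iteration branch'' reproduces the paper's Gagliardo--Nirenberg bootstrap, with your affine recursion $R_{k+1}=\lambda R_k+c$ being the same recursion as the paper's sequences $(\eta_k,\rho_k)$ and the same fixed point $\tfrac{p\alpha(m+1)}{m+2-p}$ and dichotomy $p<m+2$ versus $p\ge m+2$. The only (harmless) differences are cosmetic: you make the initialization $R_0$ and the verification of the hypotheses of Lemma \ref{power energ u} explicit, while the paper additionally notes the trivial case $r\le 1$ via Jensen.
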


\begin{proof} First, if $r\in(0,1]$, it is a direct consequence of the mass estimate in Lemma \ref{lem:basic} and Jensen's inequality.

Then, for $r>1$ such that $r<\frac{p}{2}\alpha + \frac{p-2}{2}a$, we use the energy inequality \eqref{power energy est for u} from Lemma \ref{power energ u} with $\rho:=r$. Since $\rho + a > \frac{p}{p-2} (\rho-\alpha)$, one can absorb the right-hand side by the last term of the left-hand side and conclude that $u^r\in L^1(\Omega_{T})$.

Then, for $r>1$ such that $r< \frac{p \alpha (m+1)}{(m+2-p)_+}$, the idea here is to use a Gagliardo-Nirenberg inequality combined with the previous lemma to deduce a sequence of $L^p$ spaces where $u$ belongs. 
Let $\rho\geq \alpha$.
We define $q= 2+ \frac{4\rho}{m(\rho+\alpha)}$. By the Gagliardo-Nirenberg inequality \cite{nirenberg}, there exists $C>0$ depending on $\rho, m, \alpha$ and $\Omega$ such that 
\begin{equation}
\label{GN ineq}
\|u^{\frac{\rho+\alpha}{2}}(t,.)\|_{L^q(\Omega)} \leq C \|\nabla u^{\frac{\rho+\alpha}{2}} (t,.)\|_{L^2(\Omega)}^{\frac{2}{q}}\| u^{\frac{\rho+\alpha}{2}}(t,.)\|_{L^{\frac{2\rho}{\rho+\alpha}}}^{\frac{4\rho}{mq(\rho+\alpha)}}+ C\|u^{\frac{\rho+\alpha}{2}}(t,.) \|_{L^{\frac{2\rho}{\rho+\alpha}}(\Omega)}.
\end{equation}
Putting \eqref{GN ineq} to the exponent $q$, integrating it in time gives

\begin{equation}
\label{GN finale ineq}
\|u\|_{L^{\rho+ \alpha+ \frac{2\rho}{m}}(\Omega_T)}^{\rho+ \alpha+ \frac{2\rho}{m}} \leq C \left(\sup\limits_{t\in (0,T)} \| u(t,.)\|_{L^{\rho}}^{\frac{2\rho}{m}}+ \sup\limits_{t\in (0,T)} \| u(t,.)\|_{L^{\rho}}^{q\frac{\rho+\alpha}{2}} \right) \left(\|\nabla u^{\frac{\rho+\alpha}{2}} \|_{L^2(\Omega_T)}^2 +1 \right).
\end{equation}

Now using Lemma \ref{power energ u} with $s=0$ together with the assumption that $\nabla v\in L^p(\Omega_T)$, we know that 
\begin{equation}
\label{sup ineq}
\sup\limits_{t\in (0,T)} \|u(t,.)\|^{\rho}_{L^{\rho}(\Omega)} \leq C \left( \|u_0\|^{\rho}_{L^{\rho}(\Omega)} + \|u\|_{L^{\frac{p}{p-2}(\rho-\alpha)}(\Omega_T)}^{\rho-\alpha} +1 \right)
\end{equation}
and
\begin{equation}
\label{grad ineq}
\|\nabla u^{\frac{\rho+\alpha}{2}} \|_{L^2(\Omega_T)}^2 \leq C \left( \|u_0\|^{\rho}_{L^{\rho}(\Omega)} + \|u\|_{L^{\frac{p}{p-2}(\rho-\alpha)}(\Omega_T)}^{\rho-\alpha} +1 \right).
\end{equation}

Then reinserting \eqref{sup ineq} and \eqref{grad ineq} in \eqref{GN finale ineq} we get (using the regularity of the initial data)
\begin{equation}
\label{GN and energy ineq}
\|u\|_{L^{\rho+ \alpha+ \frac{2\rho}{m}}(\Omega_T)}^{\rho+ \alpha+ \frac{2\rho}{m}} \leq C \left( 1+ \|u\|_{L^{\frac{p}{p-2}(\rho-\alpha)}(\Omega_T)}^{(\rho-\alpha)(1+q\frac{\rho+\alpha}{2\rho})} \right).
\end{equation}


For $\eta_0>1$, let us define $\rho_0>\alpha$ by $\eta_0 = \frac{p}{p-2}(\rho_0-\alpha)$, and let us define the sequences $(\eta_k)_k$ and $(\rho_k)_k$ recursively as
\begin{equation}
\eta_{k+1}= \rho_k+ \alpha+ \frac{2\rho_k}{m}, \qquad \eta_{k+1} = \frac{p}{p-2}(\rho_{k+1}-\alpha), \qquad k\ge 0.
\end{equation}
By definition of these sequences, \eqref{GN and energy ineq} gives that if $u\in L^{\eta_k}(\Omega_T)$ then $u\in L^{\eta_{k+1}}(\Omega_T)$.

We can write, for all $k\ge 0$,
\begin{equation}
\eta_{k+1}= 2\left(1+\frac1m\right) \alpha + \frac{p-2}{p}\left(1+\frac2m\right)\eta_k,
\end{equation}
so that studying the limit we get that in the limit $k \longrightarrow \infty$
\begin{eqnarray}
&\eta_{k} \longrightarrow \frac{\alpha p (1+m)}{2+m-p} \qquad & \text{ if } p < m+2 ,\\
&\eta_{k} \longrightarrow \infty \qquad & \text{ if } p \ge m+2,
\end{eqnarray}
and that concludes the proof.
\end{proof}
 
 \subsection{Transfer of integrability from $u$ to $\nabla_x v$}

We prove that we can transfer the integrability of $u$ to $\nabla_x v$. To get this result we make use of $W^{1,2}_p$ estimates that we recall
from \cite[Theorem 9.1 Chapter 5 and remark p.351]{LSU68} and \cite[Lemma 2.2]{VTP08}. 

\begin{lemma}[$W^{1,2}_p$ estimates]
\label{W12 estimates}
Let $3<q<\infty$, $w_{0} \in W^{2}_q(\Omega)$ and $w$ be the unique solution in $W^{1,2}_{q}(\Omega_{T})$ of the equation 
\begin{equation}
\left\{\begin{array}{ll}
w_t-a(t,x)\Delta w= f(t,x) & (t,x)\in \Omega_{T},\\
\nabla w \cdot n = 0 & x\in \partial \Omega, t>0, \\
w(t_0,x)=w_{0} & x\in \Omega
\end{array}\right.
\end{equation}
where $a(t,x)$ is a continuous function on $\overline{\Omega_{T}}$ satisfying 
$$0<\lambda \leq a(t,x) \leq \Lambda, \quad \forall (t,x)\in \overline{\Omega_{T}},$$
where $\lambda$ and $\Lambda$ are positive constants. We assume that $f\in L^{q}(\Omega_{T})$. Then there exists a constant $c_q>0$ depending on $q, T, \lambda$ and $\Lambda$ such that 
$$\| w\|_{W^{1,2}_{q}(\Omega_{T})} \leq c_q \left(\|f\|_{L^q(\Omega_{T})}+\|w_{0}\|_{W^{2}_q(\Omega_{T})}\right),$$
where $w_{0}$ satisfies $\nabla w_{0} \cdot n = 0$ on $\partial \Omega$.
\end{lemma}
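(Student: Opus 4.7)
The plan is to treat this as a classical parabolic maximal regularity result, following the general strategy of Ladyzhenskaya--Solonnikov--Ural'tseva. The proof is not original (we are essentially quoting the reference), so I will only sketch the architecture.

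First I would reduce to the case of zero initial data. Write $w = \tilde w + \bar w$, where $\bar w$ solves the homogeneous backward problem with initial value $w_0$ (say, via the Neumann heat semigroup on $\Omega$, which is analytic) so that classical smoothing estimates give $\|\bar w\|_{W^{1,2}_q(\Omega_T)} \lesssim \|w_0\|_{W^2_q(\Omega)}$. It then remains to estimate $\tilde w$, the solution with initial datum zero and right-hand side $\tilde f \in L^q(\Omega_T)$ (where $\tilde f$ absorbs the correction $a(t,x)\Delta \bar w - \bar w_t$; since $a\in L^\infty$ this stays in $L^q$). So the heart is to show
\begin{equation*}
\|\partial_t \tilde w\|_{L^q(\Omega_T)} + \|\nabla^2 \tilde w\|_{L^q(\Omega_T)} \le c_q \|\tilde f\|_{L^q(\Omega_T)}.
\end{equation*}

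Second, I would establish the constant-coefficient case $a(t,x) \equiv a_0 \in [\lambda,\Lambda]$. After rescaling in time by $a_0$, this is the pure heat equation with Neumann boundary condition. Representing $\tilde w$ via Duhamel with the Neumann heat kernel $K_t$ and differentiating twice in space (or once in $t$) produces a convolution operator whose kernel is a parabolic Calderón--Zygmund kernel on $\Omega_T$: it is $L^2$-bounded by energy/Plancherel (with a reflection or local flattening to transfer from $\R^m_+$ to $\Omega$), and satisfies the standard size and Hörmander cancellation estimates. The parabolic Calderón--Zygmund theorem (cf.\ \cite{LSU68}) then upgrades $L^2$-boundedness to $L^q$-boundedness for all $1<q<\infty$, yielding the desired estimate with a constant depending only on $q$, $T$, $\lambda$, $\Lambda$, and $\Omega$.

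Third, I would pass to the variable-coefficient case by a standard freezing argument. Using the uniform continuity of $a$ on $\overline{\Omega_T}$, pick a parabolic partition of unity $(\chi_j)$ of diameter $\delta$, and on each patch write
\begin{equation*}
\partial_t(\chi_j \tilde w) - a(t_j,x_j)\Delta(\chi_j \tilde w) = \chi_j \tilde f + (a(t,x)-a(t_j,x_j))\Delta(\chi_j \tilde w) + \text{lower order},
\end{equation*}
apply the constant-coefficient bound, and absorb the term $(a-a(t_j,x_j))\Delta(\chi_j \tilde w)$ on the left by taking $\delta$ small enough so that $\|a-a(t_j,x_j)\|_{L^\infty} \cdot c_q < 1/2$. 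Summing the local estimates and using that $q>3>m/2+1$ permits controlling the lower-order commutator terms by Sobolev embedding gives the global bound. The boundary patches are handled by a local flattening of $\partial\Omega$ together with even reflection, which preserves the Neumann condition, reducing again to the half-space model problem.

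The main obstacle is not conceptual but bookkeeping: one must be careful that the constant $c_q$ depends only on $q,T,\lambda,\Lambda$ (and $\Omega$), and in particular does not deteriorate with the modulus of continuity of $a$ — this is precisely the reason one needs the freezing window $\delta$ to depend on that modulus, and the continuity hypothesis on $a$ in $\overline{\Omega_T}$ cannot be weakened to mere $L^\infty$. Since the statement is well documented in \cite{LSU68} and used in exactly this form in \cite{VTP08}, in the paper we simply invoke those references rather than reproduce the argument.
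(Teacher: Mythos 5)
The paper offers no proof of this lemma at all: it is recalled verbatim from \cite[Theorem 9.1, Chapter 5]{LSU68} and \cite[Lemma 2.2]{VTP08}, so your decision to invoke those references is exactly what the paper does, and your sketch of the underlying argument (reduction to zero initial data, constant-coefficient case via parabolic Calder\'on--Zygmund theory, freezing of coefficients with boundary flattening and even reflection for the Neumann condition) is the standard proof behind them. The only slip is the justification ``$q>3>m/2+1$'' for the lower-order terms, which fails for $m\ge 4$; these terms are instead absorbed via the interpolation inequality $\|\nabla \tilde w\|_{L^q(\Omega_T)}\le \varepsilon\|\nabla^2 \tilde w\|_{L^q(\Omega_T)}+C_\varepsilon\|\tilde w\|_{L^q(\Omega_T)}$, so no dimensional restriction is needed and the conclusion is unaffected.
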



\begin{corollary}[Transfer of integrability from $u$ to $\nabla v$]
\label{gain of int of grad v}
Let $D$ satisfy \eqref{def cst}--\eqref{assump cst} and let $u, v$ be solutions of \eqref{eq:the system} with $v$ uniformly continuous on $\Omega_T$. If $u^d\in L^{p}(\Omega_{T})$ for $p\in [2,+\infty)$ then $\nabla v\in L^{2p}(\Omega_{T})$.
\end{corollary}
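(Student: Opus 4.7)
The plan is to apply parabolic $L^p$ regularity to the auxiliary function $w := (d_v + d_\gamma \langle v\rangle^\gamma)v$, which was already observed in the proof of Proposition \ref{3.4 in Tao_Winkler} to satisfy the linear parabolic equation \eqref{eq in w}
\[
\partial_t w - a(t,x)\,\Delta w = a(t,x)\,v\bigl(r_v - r_c \langle v\rangle^c - r_d \langle u\rangle^d\bigr),
\]
with $a(t,x) := d_u + d_\gamma \langle v\rangle^\gamma\!\bigl(1 + \gamma v^2/\langle v\rangle^2\bigr)$ and Neumann boundary conditions. The first step is to check that the hypotheses of Lemma \ref{W12 estimates} hold: by Lemma \ref{lem:basic} the solution $v$ is bounded, and together with the standing assumption that $v$ is uniformly continuous on $\Omega_T$, this ensures that $a$ is continuous on $\overline{\Omega_T}$ and satisfies $0 < d_u \le a \le \Lambda$ for some $\Lambda$ depending only on $\|v\|_{L^\infty}$ and the parameters $D$. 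The initial condition $w_0 = (d_v + d_\gamma \langle v_0\rangle^\gamma)v_0$ lies in $W^{2,q}(\Omega)$ for every $q$ by the $C^{2+\nu}$ regularity of $v_0$, and inherits the Neumann boundary condition from $v_0$.

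Next I would estimate the source term $f := a\,v\bigl(r_v - r_c \langle v\rangle^c - r_d \langle u\rangle^d\bigr)$. Using the $L^\infty$ bound on $v$ and the elementary inequality $\langle u\rangle^d \le C(1 + u^d)$, one obtains the pointwise bound $|f| \le C(1 + u^d)$, so the hypothesis $u^d \in L^p(\Omega_T)$ directly gives $f \in L^p(\Omega_T)$. Applying Lemma \ref{W12 estimates} with $q = p$ (assuming $p>3$) then yields $w \in W^{1,2}_p(\Omega_T)$, and in particular $w \in L^p((0,T); W^{2,p}(\Omega))$.

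To conclude, I invoke the interpolation Lemma \ref{lem:interpolation} to pass from $w \in L^p(W^{2,p})$ to $w \in L^{2p}((0,T); W^{1,2p}(\Omega))$, which gives $\nabla w \in L^{2p}(\Omega_T)$. Finally, since $\nabla v = a(t,x)^{-1}\nabla w$ and $a^{-1} \le 1/d_u$ is bounded, the desired conclusion $\nabla v \in L^{2p}(\Omega_T)$ follows. The main obstacle lies in verifying the hypotheses of Lemma \ref{W12 estimates}, where the uniform continuity of $v$ plays a crucial role in making the diffusion coefficient $a(t,x)$ continuous on $\overline{\Omega_T}$; without this, the $W^{1,2}_p$ estimate for general $p$ would not be available. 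The boundary case $p \in [2,3]$, not directly accessible through the restriction $q>3$ in the lemma, is already handled by Proposition \ref{3.4 in Tao_Winkler} at $p=2$, and the intermediate range can be recovered by a short bootstrap using the result at slightly larger exponents combined with Hölder's inequality in the bounded domain $\Omega_T$.
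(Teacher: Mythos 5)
Your proposal follows essentially the same route as the paper's proof: rewrite the problem for $w=(d_v+d_\gamma\langle v\rangle^\gamma)v$ via \eqref{eq in w}, use the uniform continuity and boundedness of $v$ to apply the $W^{1,2}_p$ estimate of Lemma \ref{W12 estimates} with source controlled by $C(1+u^d)\in L^p(\Omega_T)$, upgrade to $\nabla w\in L^{2p}(\Omega_T)$ by the interpolation Lemma \ref{lem:interpolation}, and transfer to $\nabla v$ through $\nabla w=\bigl(d_u+d_\gamma\langle v\rangle^\gamma+\gamma d_\gamma v^2\langle v\rangle^{\gamma-2}\bigr)\nabla v$ with the coefficient bounded below. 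You are in fact more explicit than the paper about verifying the hypotheses of Lemma \ref{W12 estimates} (including the restriction $q>3$, which the paper applies silently), and the argument is correct.
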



\begin{proof}[Proof of Corollary \ref{gain of int of grad v}]

Recall the function $w=(d_v+d_{\gamma}\langle v \rangle ^\gamma)v$ and its equation in \eqref{eq in w}. See that $v$ is uniformly continuous in $\Omega_T$ so continuous in $\overline{\Omega_T}$.
Combining the Lemmas \ref{W12 estimates} and \ref{lem:interpolation},  both $\nabla w$
 and $\nabla v$ are in $L^{2p}(\Omega_T)$ since $$\nabla w= \left(d_u+d_{\gamma}\langle v \rangle ^{\gamma}+\frac{\gamma d_{\gamma}v^2}{\langle v \rangle ^{2-\gamma}}\right)\nabla v.$$
\end{proof}

\subsection{Iteration and conclusion}

We deduce using the previous subsections that $u$ and $\nabla v$ are in any $L^{p}$ space. 

\begin{lemma}
\label{u grad v any Lp space}
Let $u$ and $v$ be solutions of \eqref{eq:the system} where $D$ satisfies \eqref{assump cst}. Then $u$ and $\nabla v$ are in $L^{p}(\Omega_T)$ for all $p \in (1,\infty)$ with a bound only depending on $p$, the initial data, $D$, $T$, $\Omega$ and $m$.
\end{lemma}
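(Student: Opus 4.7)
The plan is a simple one-step bootstrap combining the two transfer corollaries. First, I would record that the uniform-continuity hypothesis on $v$ demanded by Corollary \ref{gain of int of grad v} comes for free from Theorem \ref{thm:local}: since $v\in C([0,T], W^{1,p_0}(\Omega))$ with $p_0>m$, the Sobolev embedding $W^{1,p_0}(\Omega)\hookrightarrow C(\overline\Omega)$ upgrades this into a jointly continuous map on the compact set $[0,T]\times\overline\Omega$, and in particular $v$ is uniformly continuous on $\Omega_T$. This justifies every use of Corollary \ref{gain of int of grad v} below.

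Next, I would isolate the key observation: as soon as $\nabla v\in L^{p_1}(\Omega_T)$ for some $p_1>m+2$, the lemma follows. Indeed, for such $p_1$ the denominator $(m+2-p_1)_+$ vanishes in Corollary \ref{gain integ for grad v implies u}, so that $u\in L^r(\Omega_T)$ for every $r<\infty$; then $u^d\in L^p(\Omega_T)$ for every $p<\infty$, and Corollary \ref{gain of int of grad v} immediately yields $\nabla v \in L^{2p}(\Omega_T)$ for every $p$, which is the desired conclusion.

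It therefore suffices to produce one such $p_1>m+2$, starting from the seed $\nabla v\in L^4(\Omega_T)$ provided by Proposition \ref{3.4 in Tao_Winkler}. Applying Corollary \ref{gain integ for grad v implies u} with $p=4$ gives $u\in L^r(\Omega_T)$ for every $r<R$, where
\[
R := \max\Big\{\frac{4\alpha(m+1)}{(m-2)_+},\; 2\alpha + a\Big\}.
\]
The dimension-dependent part of assumption \eqref{assump cst}, namely $d<\max\big(\frac{8\alpha(m+1)}{(m^2-4)_+},\frac{4\alpha+2a}{m+2}\big)$, is exactly equivalent to the inequality $R>\frac{d(m+2)}{2}$. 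This lets one choose $q$ with $q\geq 2$, $q>(m+2)/2$, and $dq<R$; then $u^d\in L^q(\Omega_T)$, and Corollary \ref{gain of int of grad v} produces $\nabla v\in L^{2q}(\Omega_T)$ with $2q>m+2$, the desired $p_1$.

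The main point, and the single place where \eqref{assump cst} is actually used, lies in this last step: the hypothesis on $d$ is tuned precisely so that one pass through the two transfer estimates suffices to jump above the critical threshold $p=m+2$, after which the mapping $\nabla v\mapsto u$ loses no integrability and the bootstrap trivially closes. The only mild subtlety beyond this is checking that the stated condition indeed gives the one-step threshold $R>d(m+2)/2$ in both branches of the $\max$ defining $R$.
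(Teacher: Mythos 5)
Your bootstrap itself is essentially the paper's: seed $\nabla v\in L^4(\Omega_T)$ from Proposition \ref{3.4 in Tao_Winkler}, one pass through Corollaries \ref{gain integ for grad v implies u} and \ref{gain of int of grad v} tuned by \eqref{assump cst} (your equivalence $R>d(m+2)/2$ is correct) to exceed the threshold $m+2$, after which the iteration closes trivially. The genuine gap is in how you verify the hypothesis of Corollary \ref{gain of int of grad v}: the uniform continuity of $v$ on $\Omega_T$ does \emph{not} come for free from Theorem \ref{thm:local}. Amann's theorem gives $v\in C([0,t_{\mathrm{max}}),W^{1,p_0}(\Omega))$, i.e.\ continuity only on the half-open time interval; since $T$ may equal $t_{\mathrm{max}}$ (and this is the case one must ultimately treat to contradict the blow-up criterion), the Sobolev embedding yields joint continuity on $[0,T)\times\overline{\Omega}$ but no uniform modulus as $t\uparrow T$. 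Even if you restrict to $T<t_{\mathrm{max}}$, the modulus of continuity of $v$ on $[0,T]\times\overline{\Omega}$ is a property of the local solution, not controlled by the data; and the constant in the $W^{1,2}_q$ estimate of Lemma \ref{W12 estimates} does in fact depend on the modulus of continuity of the coefficient (this is why continuity, rather than mere boundedness, is required there). Hence your final bound would depend on quantities other than $p$, the initial data, $D$, $T$, $\Omega$ and $m$, which both falls short of the statement of the lemma and breaks the global-extension argument, where the bound must stay finite as $T\uparrow t_{\mathrm{max}}$.

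The paper closes exactly this point before starting the iteration: using $\nabla v\in L^4(\Omega_T)$ and Corollary \ref{gain integ for grad v implies u} with $p=4$, the condition on $d$ in \eqref{assump cst} (the same inequality you use) puts the right-hand side $v(r_v-r_c\langle v\rangle^c-r_d\langle u\rangle^d)$ of the divergence-form equation for $v$ in $L^{r'}(\Omega_T)$ for some $r'>\frac{m+2}{2}$, and then the parabolic H\"older estimates of DiBenedetto or Ladyzhenskaya--Solonnikov--Uraltseva give H\"older continuity of $v$ on $\overline{\Omega_T}$ with a modulus controlled only by the allowed data; only then is Corollary \ref{gain of int of grad v} invoked. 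So assumption \eqref{assump cst} is used twice: once to obtain this quantitative continuity of $v$, and once (as you correctly identified) to jump above $m+2$ in a single step. The continuity step is the one your proposal omits, and it cannot be replaced by the qualitative continuity coming from the local existence theorem.
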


\begin{proof}
Let us prove that $v$ is uniformly continuous in $\Omega$ to be able to apply Corollary \ref{gain of int of grad v}.
By Proposition \ref{3.4 in Tao_Winkler} we have that $\nabla v \in L^4(\Omega_T)$, so we can apply Corollary \ref{gain integ for grad v implies u} with $p=4$ and deduce that $u\in L^{r}(\Omega_T)$ for any $r<\max\left(\frac{4\alpha(m+1)}{(m-2)_+}, 2\alpha +a\right) $. Therefore  $u^{d} \in L^{r'}(\Omega_T)$ for any $r'<\left(\frac{4\alpha(m+1)}{d (m-2)_+}, \frac{2\alpha +a}{d}\right)$ where thanks to the condition $d < \max\left(\frac{8\alpha(m+1)}{(m^2-4)_+},\frac{4\alpha +2a}{m+2}\right)$ in \eqref{assump cst}, we can choose $r'>\frac{m+2}{2}$. 
We know that $v$ is solution of the following equation in divergence form 
$$\partial_t v -\nabla \cdot\left( \left(d_v+d_\gamma (1+\gamma \frac{v^2}{\langle v \rangle ^2}\langle v \rangle ^{\gamma}) \right)  \nabla v\right) = v(r_v - r_c \langle v\rangle^c - r_d \langle u\rangle^d),$$
where the diffusion coefficient $\left(d_v+d_\gamma (1+\gamma \frac{v^2}{\langle v \rangle ^2}\langle v \rangle ^{\gamma}) \right)$ is bounded and the right-hand side $v(r_v - r_c \langle v\rangle^c - r_d \langle u\rangle^d)$ in $L^{r'}(\Omega_T)$ for some $r'>\frac{m+2}{2}$.
Applying \cite[Theorem 1.3, p. 43]{D93} or \cite[Theorem 10.1, p. 204]{LSU68}, we deduce that $v$ is H\"older continuous. So in particular uniformly continuous in $\Omega_T$.

Notice now that Corollaries \ref{gain integ for grad v implies u} and \ref{gain of int of grad v} give the following sequence of implications
\begin{equation}
\nabla v \in L^p (\Omega_T) \Rightarrow u^d\in L^{p} (\Omega_T) \Rightarrow \nabla v \in L^{2p} (\Omega_T) \Rightarrow u^d\in L^{2p}  (\Omega_T). 
\end{equation}
Therefore $\nabla v \in L^{4}(\Omega_T)$ yields immediately the result. 
\end{proof}

 \begin{remark} We remark here that one can obtain an $L^p(\Omega)$-estimate for the $\nabla u$ as in equation \eqref{eq: from Tao Winkler} in Lemma \ref{3.4 in Tao_Winkler} uniformly in time following the proof proposed in 
 \cite[Prop. 3.4]{TaoWinkler_stab}. If one further assumes convexity of the domain, following \cite{TaoWinkler_stab} one can make the estimates in Lemma \ref{u grad v any Lp space}  uniform in time under additional assumptions on the parameters. 
 \end{remark}

 \section{$L^\infty$ estimate for $u$ and proof of the main Theorem} \label{sec:final proof} 

We now prove our main theorem. We first prove that the solution $u$ of \eqref{eq:the system} is bounded on $\Omega_T$, and then conclude thanks to classical H\"older and Schauder estimates.

\begin{lemma}
\label{coro u Linf}
For any solution $u:\Omega_T\rightarrow \mathbb{R}$ of \eqref{eq:the system} with $D$ satisfying \eqref{assump cst}, the following estimate holds true,
$$\| u \|_{L^{\infty}(\Omega_T)} \leq C(T),$$
where $C(T)$ only depends on the initial data, $D$, $T$, $\Omega$ and $m$. 
\end{lemma}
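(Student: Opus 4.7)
The plan is to apply a De Giorgi truncation and iteration on $u$, crucially exploiting the $L^p$-integrability of both $u$ and $\nabla v$ for every $p\in(1,\infty)$ obtained in Lemma~\ref{u grad v any Lp space}, together with $v\in L^\infty(\Omega_T)$ from Lemma~\ref{lem:basic}. First I would rewrite the first equation of~\eqref{eq:the system} in divergence form
\begin{equation*}
\partial_t u - \nabla\cdot\bigl(\tilde A(u,v)\nabla u\bigr) \,=\, \nabla\cdot\bigl(\beta d_\beta\, u\, v\,\langle v\rangle^{\beta-2}\nabla v\bigr) + u\bigl(r_u - r_a\langle u\rangle^a - r_b\langle v\rangle^b\bigr),
\end{equation*}
with $\tilde A(u,v):=d_u + d_\alpha\bigl(1+\alpha u^2/\langle u\rangle^2\bigr)\langle u\rangle^\alpha + d_\beta\langle v\rangle^\beta \ge d_u$. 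For a free parameter $k\ge k_0:=\|u_0\|_{L^\infty(\Omega)}$, I would test against $w_k:=(u-k)_+$, integrate by parts using the Neumann condition, absorb the $\nabla v$ cross-term by Young's inequality into the coercive piece $d_u|\nabla w_k|^2$ (using $v\in L^\infty$ to bound the coefficient by $Cu^2$), and use that $z(r_u - r_a\langle z\rangle^a)\le C$ for $z\ge 0$ to arrive at an energy inequality of the form
\begin{equation*}
\sup_{t\in(0,T)}\int_\Omega w_k^2(t,x)\,\mathrm dx + \iint_{\Omega_T}|\nabla w_k|^2\,\mathrm dx\,\mathrm dt \,\lesssim\, \iint_{\Omega_T\cap\{u>k\}}\bigl(1 + u^2|\nabla v|^2\bigr)\,\mathrm dx\,\mathrm dt.
\end{equation*}

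Next, writing $A_k:=\{(t,x)\in\Omega_T : u(t,x)>k\}$, a H\"older inequality with arbitrarily large exponents combined with Lemma~\ref{u grad v any Lp space} bounds the right-hand side by $C_\sigma|A_k|^{1-\sigma}$ for any $\sigma>0$. I would then couple this with the standard parabolic Sobolev embedding $L^\infty_t L^2_x\cap L^2_t H^1_x\hookrightarrow L^{2(1+2/m)}(\Omega_T)$ and with the Chebyshev-type inequality $|A_h|\le (h-k)^{-2(1+2/m)}\iint w_k^{2(1+2/m)}$ valid for $h>k$, to derive the De Giorgi--Stampacchia inequality
\begin{equation*}
|A_h| \,\le\, \frac{C}{(h-k)^{2(1+2/m)}}\,|A_k|^{(1-\sigma)(1+2/m)}, \qquad h>k\ge k_0,
\end{equation*}
in which the exponent $(1-\sigma)(1+2/m)$ is strictly greater than $1$ for $\sigma$ small enough. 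Stampacchia's classical iteration lemma then furnishes a finite $k_\infty$ (depending only on $T$, $\Omega$, $m$, $D$ and the initial data) such that $|A_{k_\infty}|=0$, which gives $u\le k_\infty$ almost everywhere on $\Omega_T$, i.e.\ the desired $L^\infty$ bound.

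The main obstacle I expect is precisely the control of the cross term $\nabla\cdot(B(u,v)\nabla v)$ at high values of $u$: since $B(u,v)\sim u$, a pointwise Young's inequality against $d_u|\nabla w_k|^2$ cannot absorb it, and one is forced to estimate the extra factor $u^2$ globally on the level set $\{u>k\}$. This is possible only thanks to the joint membership of $u$ and $\nabla v$ in every $L^p$ established at significant cost in Proposition~\ref{3.4 in Tao_Winkler} and Lemma~\ref{u grad v any Lp space}, which supplies the margin $|A_k|^{1-\sigma}$ with $\sigma$ arbitrarily small that closes the De Giorgi iteration. Once the $L^\infty$-bound on $u$ is in hand, the theorem follows from classical H\"older and Schauder parabolic estimates applied successively to the linearized equations for $v$ and $u$, as outlined in Section~\ref{sec:strategy}.
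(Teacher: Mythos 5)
Your argument is correct, but it follows a genuinely different route from the paper. You run a Stampacchia-type level-set iteration with the quadratic truncation energy: test with $(u-k)_+$ for $k\ge\|u_0\|_{L^\infty(\Omega)}$ (legitimate since $u_0\in C^{2+\nu}(\overline\Omega)$, and it kills the initial term), absorb the cross term into $d_u|\nabla (u-k)_+|^2$ using $v\in L^\infty$, bound the remaining $\iint_{A_k}(u+u^2|\nabla v|^2)$ by $C_\sigma|A_k|^{1-\sigma}$ via H\"older with large exponents (note the reaction term actually produces $\iint_{A_k}(u-k)_+$ rather than $|A_k|$, but the same H\"older step handles it since $u\in L^p$ for every $p$), and close with the parabolic embedding $L^\infty_tL^2_x\cap L^2_tH^1_x\hookrightarrow L^{2(1+2/m)}(\Omega_T)$, Chebyshev, and Stampacchia's lemma, the exponent $(1-\sigma)(1+2/m)>1$ for $\sigma$ small. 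The paper instead builds a ``De Giorgi class'' energy estimate for $(u-k)_+^{\rho-1}$ test functions (Proposition \ref{energy estim for 1st lemma of DG}), proves a first De Giorgi lemma in the form of an $L^\rho$--$L^\infty$ smallness implication for $\rho>m+2$ via a two-index iteration $U_k\le C^kU_{k-2}^\lambda$ on nested time intervals $t_k\uparrow\tau_2$ and levels $c_k\uparrow 1/2$, handles the initial layer $[0,T/2]$ separately through Amann's $C([0,T),W^{1,p_0})$ regularity and the embedding $W^{1,p_0}\hookrightarrow L^\infty$, and then applies the lemma to the normalization $\delta u/(\|u\|_{L^{m+3}(\Omega_T)}+\delta)$ on $(T/2,T)$. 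Both proofs hinge on exactly the same prior input ($u,\nabla v\in L^p(\Omega_T)$ for all finite $p$ from Lemma \ref{u grad v any Lp space}, plus $v\in L^\infty$), so they are comparable in strength; your version is somewhat more elementary and avoids the time-splitting and rescaling, at the price of invoking $\|u_0\|_{L^\infty}$ as the starting level, whereas the paper's formulation only needs smallness of an $L^{m+3}$ norm on a later time interval and reuses Amann's continuity near $t=0$.
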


\begin{proof} The result can be obtained 
with the Moser iteration technique, see \cite{TaoWinkler_stab}, \cite[Appendix A]{Tao:2011ug}. For the paper to be self-contained we present our proof: it relies on the De-Giorgi technique. The details are presented in the Appendix \ref{app:DG}. 
We prove separately the boundedness on $\left(0,\frac{T}{2} \right)\times \Omega$ and $\left(\frac{T}{2},T \right)\times \Omega$. 

First by a Sobolev embedding (see for example \cite[Theorem p.85]{adams03}) since by Theorem \ref{amann}, $u\in C(\left[0,T\right),W^{1,p_0}(\Omega))$, we deduce 
$$\sup_{t\in \left[0, \frac{T}{2} \right]}\norm{u(t,.)}_{L^{\infty}(\Omega)} \leq C \sup_{t\in \left[0, \frac{T}{2} \right]} ||u(t,.)||_{W^{1,p_0}(\Omega)} \leq C(T).$$ 
Then apply Lemma \ref{lemma L2 Linf} with $\tau_1=0$ and $\tau_2=\frac{T}{2}$ to $\frac{\delta u}{\|u\|_{L^{m+3}(\Omega_T)}+\delta}$ which still satisfies the energy estimate \eqref{energy est for u} for $\rho=m+3$, to deduce that $u$ is bounded in $\left(\frac{T}{2},T\right)\times \Omega$ which ends the proof.
\end{proof}

We now conclude the proof of Theorem \ref{th: main} using the classical H\"older and Schauder estimates.

\begin{proof}
Let us define $A(t,x),F(t,x)$ and $f(t,x), B(t,x)$ and $g(t,x)$ as follows,
$$A(t,x)= d_u+d_{\alpha}(1+\alpha \frac{u^2}{\langle u \rangle ^2}) \langle u \rangle ^{\alpha} + d_{\beta}\langle v \rangle ^{\beta},$$
$$F(t,x)=  \beta d_{\beta} \frac{u v}{\langle v \rangle ^{2-\beta}} \nabla v=G(t,x) \nabla v,$$
$$f(t,x)= u(r_u-r_a\langle u\rangle^a-r_b\langle v\rangle^b).$$
Then $u$ is a solution of 
$$ \partial_t u -\nabla \cdot (A(t,x) \nabla u) + \nabla \cdot  F= f.$$ 

Let us define $B(t,x)$ and $g(t,x)$ as follows,
$$B(t,x)= d_v +d_{\gamma}(1+\gamma \frac{v^2}{\langle v \rangle ^2})\langle v \rangle ^{\gamma},$$
$$g(t,x)= v(r_v-r_c \langle v\rangle^c-r_d \langle u\rangle^d).$$
Then $v$ is a solution of 
$$ \partial_t v -\nabla \cdot (B(t,x) \nabla v) = g.$$ 

From Lemmas \ref{lem:basic}, \ref{u grad v any Lp space} and \ref{lemma L2 Linf},  there are constants $\Lambda>0$ and $C>0$ such that 
$$ \Lambda^{-1}\leq A(t,x) \leq \Lambda \quad \text{and} \quad \Lambda^{-1}\leq B(t,x) \leq \Lambda \quad \forall (t,x) \in \Omega_T,$$ 
$$ \| f\|_{L^{\infty}(\Omega_T)} + \|g \|_{L^{\infty}(\Omega_T)} \leq C,$$ 
$$ \|F \|_{L^{p}(\Omega_T)} \leq C \quad \mbox{for any } p\in (2,\infty).$$

Using for both $u$ and $v$ the classical H\"older regularity theory  \cite[Theorem 10.1, p.204]{LSU68} or \cite[Theorem 1.3, Remark 1.1 p.43]{D93}, there exists constants $\varpi \in (0,1)$ and $C(T)>0$ such that 

$$ \|u\|_{C^{\varpi, \frac{\varpi}{2}}(\overline{\Omega_T})} \leq C(T) \quad \text{and} \quad \|v\|_{C^{\varpi, \frac{\varpi}{2}}(\overline{\Omega_T})} \leq C(T).$$
We define $w=(d_v+d_{\gamma}\langle v \rangle ^\gamma)v$ and $\bar{w}=(d_u+d_\alpha \langle u \rangle ^\alpha+d_\beta \langle v \rangle ^\beta)u$ which solves 
$$\partial_t w = B(t,x) \Delta w +B(t,x) g(t,x),$$
and 
$$ \partial_t \bar{w} = A(t,x) \Delta\bar{w} +A(t,x) f(t,x)+ G(t,x) \Delta w + G(t,x) g(t,x),$$
with homogeneous Neumann boundary conditions. 
We apply the Schauder estimate \cite[Theorem 5.3, pp.320-321]{LSU68} first to $w$ since the coefficients in the equation of $w$ are H\"older continuous, to obtain
$$\|v\|_{C^{\frac{2+\kappa}{2}, 2+\kappa}(\overline{\Omega_T})} \leq C(T),$$
where $\kappa\in \left(0,\nu\right].$
Using the last estimate and the Schauder estimate for the equation of $\bar{w}$, we get 
$$\|u\|_{C^{\frac{2+\mu}{2},2+\mu}(\overline{\Omega_T})} \leq C(T),$$
where $\mu\in \left(0,\nu\right]$. Iterating again, it gives $u,v \in C^{\frac{2+\nu}{2},2+\nu}(\overline{\Omega_T})$ regarding to the initial data. So $\nabla u$ and $\nabla v$ are H\"older continuous on $\overline{\Omega_T}$ so necessarily \eqref{condition on the grad} is satisfied. Consequently, by Theorem \ref{amann} the maximal time of the local solutions is $t_{\mathrm{max}}=\infty$ which ends the proof. 
\end{proof}

\appendix
 
\section{Boundedness of $u$ with De Giorgi} \label{app:DG}

In this appendix we get the first lemma of De Giorgi, which is an $L^{\rho}-L^{\infty}$ estimate for $u$. To this end, we define a relevant class of energy estimate which we call the De Giorgi class by analogy with quasi-linear elliptic and parabolic studies \cite{D95,Lieberman}. 

 
\begin{proposition}[The De Giorgi class]
\label{energy estim for 1st lemma of DG}
 Considering the system of cross-diffusion equations given by \eqref{eq:the system}, when $\alpha>0$ the following energy estimate holds true for any constant $\rho>2$ and any $k \in \mathbb{R}$,
\begin{align} \label{energy est for u}& \int_{\Omega} (u-k)_{+}^{\rho}(t,x) \dd x  +   \iint_{(s,t)\times\Omega}  \vert \nabla(u-k)_{+}^{\frac{\rho}{2}} \vert^2(\tau,x) \dd x \dd\tau     \\ & \lesssim  \int_{\Omega} (u-k)_{+}^{\rho}(s,x) \dd x + \left( \iint_{(s,t)\times\Omega} (u-k)_{+}^{\rho} (\tau,x) \dd x \dd\tau  \right)^{\frac{\rho-2}{\rho}}  \notag \\ & +  \iint_{(s,t)\times\Omega}  (u-k)_{+}^{\rho-1}(\tau,x) \dd x \dd\tau \notag 
\end{align} 
where the constant only depends on the initial data, $D$, $T$, $\Omega$, $\rho$. 
\end{proposition}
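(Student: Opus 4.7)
The plan is to multiply the first equation of \eqref{eq:the system} by $(u-k)_+^{\rho-1}$ and integrate on $(s,t)\times\Omega$. With the notation $M = d_u + d_\alpha\langle u\rangle^\alpha + d_\beta\langle v\rangle^\beta$ and $f = r_u - r_a\langle u\rangle^a - r_b\langle v\rangle^b$, the equation reads $\partial_t u - \Delta(Mu) = uf$. Integration by parts produces no boundary contribution, because the Neumann condition $\partial_n u = \partial_n v = 0$ gives $\partial_n(Mu) = M\partial_n u + u(M_u\partial_n u + M_v\partial_n v) = 0$. One obtains
\begin{align*}
\tfrac{1}{\rho}\!\int (u-k)_+^\rho\Big|_s^t &+ (\rho-1)\!\iint (M+uM_u)(u-k)_+^{\rho-2}|\nabla u|^2 \\
&+ (\rho-1)\!\iint uM_v(u-k)_+^{\rho-2}\nabla u\cdot\nabla v = \iint uf\,(u-k)_+^{\rho-1}.
\end{align*}
Since $M+uM_u\ge d_u>0$, the second term controls $\tfrac{4d_u(\rho-1)}{\rho^2}\iint |\nabla (u-k)_+^{\rho/2}|^2$ on the left.

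Next I would treat the cross term and the reaction term. Because $v$ is bounded by Lemma \ref{lem:basic}, the factor $M_v = d_\beta \beta v\langle v\rangle^{\beta-2}$ is uniformly bounded. A Young inequality with a small parameter then yields
$$
(\rho-1)\Big|\!\iint uM_v(u-k)_+^{\rho-2}\nabla u\cdot\nabla v\Big| \leq \tfrac{\rho-1}{2}\!\iint (M+uM_u)(u-k)_+^{\rho-2}|\nabla u|^2 + C\!\iint u^2(u-k)_+^{\rho-2}|\nabla v|^2,
$$
and the first piece is absorbed into the coercive contribution on the left. For the reaction term, use $\langle v\rangle^b \le C$ and the elementary fact that $z\,(r_u-r_a\langle z\rangle^a)$ is bounded above for $z\ge 0$; this gives $uf\le C$ pointwise, so the reaction term is controlled by $C\iint (u-k)_+^{\rho-1}$, which is exactly the last term on the right-hand side of \eqref{energy est for u}.

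The main obstacle is to rewrite the leftover term $\iint u^2(u-k)_+^{\rho-2}|\nabla v|^2$ in the precise sublinear form $\big(\iint (u-k)_+^\rho\big)^{(\rho-2)/\rho}$ that De Giorgi's iteration will require. The key idea is a three-factor Hölder inequality: place $(u-k)_+^{\rho-2}$ in $L^{\rho/(\rho-2)}(\Omega_T)$, which produces exactly $\big(\iint (u-k)_+^\rho\big)^{(\rho-2)/\rho}$, and distribute the remaining mass $2/\rho$ equally between $u^2$ and $|\nabla v|^2$ by putting both in $L^\rho(\Omega_T)$, i.e., $u\in L^{2\rho}(\Omega_T)$ and $\nabla v\in L^{2\rho}(\Omega_T)$. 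These two norms are finite (and depend only on the data, $D$, $T$, $\Omega$, and $\rho$) thanks to Lemma \ref{u grad v any Lp space}, which provides $u,\nabla v\in L^q(\Omega_T)$ for every finite $q$. One concludes
$$
\iint u^2(u-k)_+^{\rho-2}|\nabla v|^2 \;\leq\; C\,\|u\|_{L^{2\rho}(\Omega_T)}^2\,\|\nabla v\|_{L^{2\rho}(\Omega_T)}^2\,\Big(\!\iint (u-k)_+^\rho\Big)^{(\rho-2)/\rho}.
$$
Combining this with the absorption of the coercive term and the reaction bound yields exactly the inequality \eqref{energy est for u}, with the constant depending on the initial data, $D$, $T$, $\Omega$ and $\rho$ through $\|u\|_{L^{2\rho}(\Omega_T)}$ and $\|\nabla v\|_{L^{2\rho}(\Omega_T)}$. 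The restriction $\rho>2$ enters precisely here to ensure that the Hölder exponent $\rho/(\rho-2)$ is finite.
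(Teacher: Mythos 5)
Your proposal is correct and follows essentially the same route as the paper: multiply by $(u-k)_+^{\rho-1}$, absorb the cross term via Young's inequality into the coercive gradient term, bound the reaction term pointwise by a constant times $(u-k)_+^{\rho-1}$, and control the leftover $\iint u^2(u-k)_+^{\rho-2}|\nabla v|^2$ by the three-factor H\"older inequality with exponents $\rho,\rho,\rho/(\rho-2)$, using the $L^{2\rho}(\Omega_T)$ bounds on $u$ and $\nabla v$ from Lemma \ref{u grad v any Lp space} and the $L^\infty$ bound on $v$ from Lemma \ref{lem:basic}. This is exactly the paper's argument, so no further comparison is needed.
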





\begin{proof}[Proof of Proposition \ref{energy estim for 1st lemma of DG}]
We multiply the equation for $u$ by $(u-k)^{\rho-1}_+$ and integrate on $(s,t)\times \Omega$,
  \begin{align*}
\iint_{(s,t)\times \Omega} \frac{\partial_t (u-k)_+^{\rho}}{\rho} \dd x\dd t  + \iint_{(s,t)\times \Omega} \beta d_{\beta} \frac{v}{\langle v \rangle ^{2-\beta}}u(u-k)_+^{\frac{\rho}{2}-1} \nabla v\cdot \nabla (u-k)_+^{\frac{\rho}{2}} \dd x\dd t \notag \\ 
+ \iint_{(s,t)\times \Omega}   \Big( (d_u+d_{\alpha}\left(1+\alpha\frac{ u^2}{\langle u \rangle ^2}\right)\langle u \rangle ^{\alpha} + d_{\beta}\langle v \rangle ^{\beta})  \vert \nabla (u-k)_+^{\frac{\rho}{2}}\vert^2  \Big) \dd x\dd t\\
= \iint_{(s,t)\times \Omega}  (u-k)_+^{\rho-1}u (r_u-r_a\langle u\rangle^a-r_b\langle v\rangle^b)\dd x\dd t,
\end{align*}
which using the fact that $-\langle x\rangle^a\le -x^a$ and $x(r_u-r_a x^a)\le C$ for all $x\ge0$, simplifies into 
  \begin{align}
  \label{ener estim simpl}
\int_{\Omega}  \frac{1}{\rho}(u-k)_+^{\rho}(t,x) \dd x\dd t   + 
 \iint_{(s,t)\times \Omega} \beta d_{\beta} \frac{v}{\langle v \rangle ^{2-\beta}}u(u-k)_+^{\frac{\rho}{2}-1} \nabla v\cdot \nabla (u-k)_+^{\frac{\rho}{2}} \dd x\dd t \notag \\ 
 + \iint_{(s,t)\times \Omega}  d_{u}  \vert \nabla (u-k)_+^{\frac{\rho}{2}}\vert^2   \dd x\dd t \notag\\
 \leq \int_{\Omega}  \frac{1}{\rho}(u-k)_+^{\rho}(s,x) \dd x\dd t + \iint_{(s,t)\times \Omega}  (u-k)_+^{\rho-1}\dd x\dd t. 
\end{align}
Applying Young's inequality and then a H\"older inequality to the second term of the left hand side yields
\begin{align}
\label{ener estim young holder}
- \iint_{(s,t)\times \Omega} \beta d_{\beta} \frac{v}{\langle v \rangle ^{2-\beta}}u(u-k)_+^{\frac{\rho}{2}-1} \nabla v\cdot \nabla (u-k)_+^{\frac{\rho}{2}} \dd x\dd t \\
\leq  \iint_{(s,t)\times \Omega} \frac{(\beta d_{\beta})^2}{2d_{u}} \frac{v^2}{\langle v \rangle ^{4-2\beta}}  u^2  |\nabla v|^2 (u-k)_+^{\rho-2}  \dd x\dd t +\iint_{(s,t)\times \Omega} \frac{d_{u}}{2} |\nabla (u-k)_+^{\frac{\rho}{2}}|^2 \dd x\dd t \notag\\
\leq \frac{(\beta d_{\beta})^2}{2d_{u}} (1+\max v^{2\beta}) \left(\iint_{\Omega_T} u^{2\rho}  \dd x\dd t \right)^{\frac{1}{\rho}} \left(\iint_{\Omega_T}|\nabla v|^{2\rho}  \dd x\dd t\right)^{\frac{1}{\rho}} \left(\iint_{(s,t)\times \Omega} (u-k)_+^{\rho} \dd x\dd t\right)^{\frac{\rho-2}{\rho}}  \notag\\
+\iint_{(s,t)\times \Omega} \frac{d_{u}}{2} |\nabla (u-k)_+^{\frac{\rho}{2}}|^2  \dd x\dd t. \notag
\end{align}
 Combining \eqref{ener estim simpl} and \eqref{ener estim young holder} and using Lemmas \ref{lem:basic} and \ref{u grad v any Lp space} gives the result. 
\end{proof}

 \smallsection{First Lemma of De Giorgi}
We get from the De Giorgi class the first lemma of De Giorgi, which is a $L^{\rho}-L^{\infty}$ estimate to conclude that $u$ is bounded.

\begin{lemma}[First Lemma of De Giorgi, $L^\rho-L^{\infty}$ estimate]
\label{lemma L2 Linf}
Let $0\leq \tau_1<\tau_2<T$ and $\rho>m+2$.
There exists a positive universal constant $\delta$ only depending on the initial data, $D$, $T$, $\Omega$, $m$, $\rho$ such that 
for any nonnegative function $u:\Omega_T\rightarrow \mathbb{R}$ satisfying the energy estimate \eqref{energy est for u} the following implication holds true.
If 
$$\displaystyle \iint_{(\tau_1,T)\times\Omega} u^\rho \dd x \dd t \leq \delta,$$
then we have 
$$u\leq \frac{1}{2} \quad \mbox{ in } (\tau_2,T) \times \Omega.$$
\end{lemma}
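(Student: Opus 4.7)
The plan is to run a classical \textit{first-lemma-of-De-Giorgi} iteration built on the family of energy estimates from Proposition~\ref{energy estim for 1st lemma of DG}. I introduce increasing cut-off levels $k_n = \tfrac12(1-2^{-n})$ (so $k_0=0$ and $k_n\nearrow\tfrac12$) and shrinking time windows $(t_n,T)$ defined by $t_n = \tau_1 + (\tau_2-\tau_1)(1-2^{-n})$. With $\phi_n := (u-k_n)_+$, define
\[
U_n := \sup_{t\in(t_n,T)}\int_\Omega \phi_n^\rho(t,x)\,\dd x + \iint_{(t_n,T)\times \Omega}|\nabla \phi_n^{\rho/2}|^2 \,\dd x\,\dd t.
\]
The goal is to establish a nonlinear recurrence of the form $U_n \leq C\,b^n\, U_{n-1}^{1+\nu}$ with $\nu>0$. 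A standard iteration lemma for such recurrences forces $U_n \to 0$ once $U_1$ is below an explicit threshold, from which $(u-\tfrac12)_+ \equiv 0$ on $(\tau_2,T)\times\Omega$, i.e.\ $u\leq \tfrac12$ there.

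To derive the recurrence, I apply \eqref{energy est for u} with $k=k_n$ and arbitrary $s\in(t_{n-1},t_n)$, $t\in(t_n,T)$, then average in $s$ over $(t_{n-1},t_n)$ (of length $2^{-n}(\tau_2-\tau_1)$) to absorb the initial-time contribution $\int\phi_n^\rho(s)$. This yields
\[
U_n \lesssim \frac{2^n}{\tau_2-\tau_1}\iint_{(t_{n-1},T)\times \Omega}\phi_n^\rho + \Bigl(\iint_{(t_{n-1},T)\times \Omega}\phi_n^\rho\Bigr)^{(\rho-2)/\rho} + \iint_{(t_{n-1},T)\times \Omega}\phi_n^{\rho-1}.
\]
Writing $A_n := \{u>k_n\}\cap\bigl((t_{n-1},T)\times\Omega\bigr)$, the sharp bound $\iint\phi_n^\rho \leq C^n\,U_{n-1}^{1+2/m}$ is obtained by a twofold use of the parabolic Sobolev embedding $L^\infty_t L^\rho_x\cap L^2_t H^1_x \hookrightarrow L^{\rho(m+2)/m}_{t,x}$ applied to $\phi_{n-1}^{\rho/2}$: first via H\"older's inequality $\iint\phi_n^\rho \leq \iint\phi_{n-1}^\rho \mathbf{1}_{A_n} \leq C\,U_{n-1}\,|A_n|^{2/(m+2)}$, and then on the super-level set through $\mathbf{1}_{A_n}\leq (2^{n+1}\phi_{n-1})^{\rho(m+2)/m}$, giving $|A_n|\leq C\cdot 2^{cn}\,U_{n-1}^{(m+2)/m}$. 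A parallel H\"older--Sobolev argument handles $\iint\phi_n^{\rho-1}\leq C^n\,U_{n-1}^{1+2/m}$.

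The principal obstacle is the middle term $(\iint\phi_n^\rho)^{(\rho-2)/\rho}$: its outer exponent is strictly less than $1$, so no naive estimate gives improvement over $U_{n-1}$. Substituting the sharp bound produces $C^n\,U_{n-1}^{(1+2/m)(\rho-2)/\rho}$, and the direct computation
\[
\Bigl(1+\tfrac{2}{m}\Bigr)\tfrac{\rho-2}{\rho}-1 = \tfrac{2(\rho-m-2)}{m\rho}
\]
shows the resulting exponent strictly exceeds $1$ \emph{precisely} when $\rho>m+2$, which is the hypothesis of the lemma. This closes the recurrence with $\nu = 2(\rho-m-2)/(m\rho)>0$.

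For the base step, running the same bookkeeping at $n=1$ starting from $\phi_0=u$, combined with the hypothesis $\iint u^\rho\leq \delta$ and the H\"older bound $\iint u^{\rho-1}\leq (\iint u^\rho)^{(\rho-1)/\rho}|\Omega_T|^{1/\rho}$, yields $U_1\leq C\,\delta^{(\rho-2)/\rho}$, which is smaller than the threshold for $\delta$ sufficiently small. The classical nonlinear iteration lemma then delivers $U_n\to 0$, completing the proof.
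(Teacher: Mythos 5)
Your proposal is correct and follows essentially the same De Giorgi scheme as the paper: the same truncation levels $\tfrac12(1-2^{-n})$ and shrinking time slices, the same use of the energy estimate \eqref{energy est for u} with averaging over the initial time $s$, Chebyshev on the level sets combined with Sobolev embedding, and the identical critical exponent $(1+\tfrac2m)\tfrac{\rho-2}{\rho}>1$ exactly when $\rho>m+2$ to tame the sublinear term $\bigl(\iint\phi_n^\rho\bigr)^{(\rho-2)/\rho}$. The only cosmetic difference is that you iterate the full energy (sup-in-time plus gradient), obtaining a one-step recurrence via the parabolic space-time embedding, whereas the paper iterates only $\iint(u-c_k)_+^{\rho}$, uses the spatial Sobolev inequality, and arrives at a two-step recurrence $U_k\le C^kU_{k-2}^{\lambda}$ that it closes after reindexing.
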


\begin{proof}

In this proof $C>0$ will denote a constant which will only depend on the initial data, $D$, $T$, $\Omega$, $m$, $\rho$. 
We define for any $t\in (0,T)$, $\Omega_{t,T}=(t,T)\times \Omega$ and
$$U_k=\displaystyle\iint_{\Omega_{t_k,T}} (u-c_k)_+^\rho \mathrm{d}x\mathrm{d}t, $$
where $t_k = \tau_1+(\tau_2-\tau_1)\left(1-\frac{1}{2^k} \right)  $ and $c_k = \frac{1}{2}(1-2^{-k}) $. 
We notice that $t_k$ goes from $\tau_1$ to $\tau_2$ and $c_k$ from $0$ to $\frac{1}{2}$.    
We would like to prove that $U_k$ satisfies the following induction formula $$U_k\leq C^{k} U_{k-2}^{\lambda},$$
where $C>0$ is a universal constant and $\lambda>1$ also.
Defining $V_k=U_{2k}$, the sequence $(V_k)$ satisfy 
$$V_k\leq C^{k} V_{k-1}^{\lambda},$$
and applying \cite[Lemma 3.12]{gueDGhalv2}, we deduce that $V_k=U_{2k}$ tends to $0$ when $V_0=U_0$ is small enough. Moreover we have $U_0 = \displaystyle \iint_{\Omega_{\tau_1,T}} u_{+}^\rho \dd x \dd t$ and $U_{\infty}=  \displaystyle \iint_{\Omega_{\tau_2,T}} \left(u-\frac{1}{2}\right)_{+}^2 \dd x \dd t=0$ and we deduce the result. 

Let us prove the induction formula. 
Let us define the Sobolev exponent  $$\sigma= \left\{ \begin{array}{ll}
\frac{2m}{m-2} &\mbox{ if } m>2, \\
5 & \mbox{ if } m=2,\\
+\infty &\mbox{ if } m=1,
\end{array}\right.$$ 
 in the following Sobolev inequality, for almost every $t\in (t_k,t),$
\begin{equation}
\label{sobolevineq}
\| (u-c_k)_+^{\frac{\rho}{2}}(t,\cdot) \|_{L^\sigma (\Omega)} \leq C(m)\| (u-c_k)_+^{\frac{\rho}{2}}(t,\cdot) \|_{H^{1}(\Omega)},
\end{equation}
where $C(m)$ is a constant which only depends on the dimension $m$, see \cite{adams03}.
Using a H\"older inequality, we have
\begin{align}
\label{lm1 ineq1}
U_k=\displaystyle\iint_{\Omega_{t_k,T}} (u-c_k)_+^\rho \dd x \dd t
\leq \displaystyle\int_{t_k}^{T} \left(\displaystyle\int_{\Omega} (u-c_k)_+^{\frac{\rho \sigma}{2}} (t,\cdot) \mathrm{d}x\right)^{\frac{2}{\sigma}} |\{u(t,\cdot)\geq c_k\}\cap \Omega|^{1-\frac{2}{\sigma}} \mathrm{d}t.
\end{align}
Since $\{u(t,\cdot)\geq c_k \} = \{u(t,\cdot) \geq c_{k-1}+ 2^{-k-1} \}$, we deduce that for $t\in (t_k,T)$ 
\begin{align}
\label{lm1 ineq2}
 |\{u(t,\cdot)\geq c_k\}\cap \Omega|^{1-\frac{2}{\sigma}} &\leq  |\{u(t,\cdot)\geq c_{k-1}+ 2^{-k-1} \}\cap \Omega|^{1-\frac{2}{\sigma}} \nonumber \\
 & \leq \left(2^{\rho (k+1)} \displaystyle\int_{\Omega} (u-c_{k-1})_+^{\rho}(t,\cdot) \dd x \right)^{1-\frac{2}{\sigma}} \nonumber \\
 &\leq C^k \left( \sup\limits_{t\in (t_k,T)} \displaystyle\int_{\Omega} (u-c_{k-1})_+^{\rho}(t,\cdot)\dd x \right)^{1-\frac{2}{\sigma}} .
\end{align}
We can use the first part of the inequality defining the De Giorgi class in Lemma \ref{energy estim for 1st lemma of DG} with $s$ integrated in $(t_{k-1},t_{k})$ to bound the supremum and obtain in \eqref{lm1 ineq2}, 
 \begin{align}
\label{lm1 ineq3}
& |\{u(t,\cdot)\geq c_k\}\cap \Omega|^{1-\frac{2}{\sigma}} \nonumber \\
 \leq & C^k \left(\displaystyle\iint_{\Omega_{t_{k-1},T}}(u-c_{k-1})_{+}^{\rho} \dd x \dd t +  \left(\displaystyle\iint_{\Omega_{t_{k-1},T}}(u-c_{k-1})_{+}^\rho  \dd x \dd t\right)^{\frac{\rho-2}{\rho}} + \iint_{\Omega_{t_{k-1},T}}(u-c_{k-1})_{+}^{\rho-1}  \dd x \dd t \right)^{1-\frac{2}{\sigma}} \nonumber \\
 \leq &  C^k \left(\displaystyle\iint_{\Omega_{t_{k-1},T}}(u-c_{k-1})_{+}^{\rho}  \dd x \dd t +  \displaystyle\iint_{\Omega_{t_{k-1},T}}\mathbbm{1}_{\{u\geq c_{k-1} \}}  \dd x \dd t \right)^{1-\frac{2}{\sigma}} \nonumber\\
& + C^k \left(\displaystyle\iint_{\Omega_{t_{k-1},T}}(u-c_{k-1})_{+}^{\rho}  \dd x \dd t +  \displaystyle\iint_{\Omega_{t_{k-1},T}}\mathbbm{1}_{\{u\geq c_{k-1} \}}  \dd x \dd t \right)^{\frac{\rho-2}{\rho}(1-\frac{2}{\sigma})}
\end{align}
where we used that $(u-c_{k-1})_+^{\rho-1}\leq  (u-c_{k-1})_+^{\rho}+ \mathbbm{1}_{\{u\geq c_{k-1} \}}$ to get the last bound. 
Since we have 
\begin{align*}
  \displaystyle \iint_{\Omega_{t_{k-1},T}} \mathbbm{1}_{\{u\geq c_{k-1} \}} \dd x \dd t & = |\{u\geq c_{k-1} \}\cap \Omega |\\
 & \leq |\{u(t,\cdot)\geq c_{k-2}+ 2^{-k} \}\cap \Omega| \\
& \leq 2^{\rho k} \displaystyle\int_{\Omega} (u-c_{k-2})_+^\rho \\
& \leq 2^{\rho k} U_{k-2},
\end{align*}
we deduce using \eqref{lm1 ineq3}, 
\begin{align}
\label{lm1 ineq4}
 |\{u(t,\cdot)\geq c_k\}\cap \Omega|^{1-\frac{2}{\sigma}} & \leq C^{k} \left( \left( U_{k-1} + U_{k-2} \right)^{1-\frac{2}{\sigma}}+\left( U_{k-1} + U_{k-2} \right)^{\frac{\rho-2}{\rho}(1-\frac{2}{\sigma})} \right)\nonumber\\
 & \leq  C^{k} \left(U_{k-2}^{1-\frac{2}{\sigma}} + U_{k-2}^{\frac{\rho-2}{\rho}(1-\frac{2}{\sigma})}\right)
\end{align}
We notice that the last bound is independent of the variable $t$ so it remains to bound $\displaystyle\int_{t_k}^{T} \left(\displaystyle\int_{\Omega} (u-c_k)_+^{\frac{\rho \sigma}{2}} \mathrm{d}x\right)^{\frac{2}{\sigma}} \mathrm{d}t$ in \eqref{lm1 ineq1}.
Using the Sobolev inequality \eqref{sobolevineq} and the second part of the inequality defining the De Giorgi class in Lemma \ref{energy estim for 1st lemma of DG} with $s$ integrated in $(t_{k-1},t_k)$, we deduce
 \begin{align}
 \label{lm1 ineq5}
& \displaystyle\int_{t_k}^{t} \left(\displaystyle\int_{\Omega} (u-c_k)_+^{\frac{\rho\sigma}{2}} \dd x \right)^{\frac{2}{\sigma}} \dd t \nonumber \\
 \leq &  C\left(\displaystyle\iint_{\Omega_{t_k,T}} (u-c_k)_+^\rho \dd x \dd t +\displaystyle \iint_{\Omega_{t_k,T}} |\nabla_x (u-c_k)_+^{\frac{\rho}{2}}|^2 \dd x \dd t \right) \nonumber \\
 \leq  & C \left(\displaystyle\iint_{\Omega_{t_{k-1},T}}(u-c_{k-1})_{+}^{\rho} \dd x \dd t +  \left(\displaystyle\iint_{\Omega_{t_{k-1},T}}(u-c_{k-1})_{+}^{\rho} \dd x \dd t \right)^{\frac{\rho-2}{\rho}}+\iint_{\Omega_{t_{k-1},T}}(u-c_{k-1})_{+}^{\rho-1} \dd x \dd t \right)\nonumber \\
\leq & C \left(\displaystyle\iint_{\Omega_{t_{k-1},T}}(u-c_{k-1})_{+}^{\rho} \dd x \dd t+  \displaystyle\iint_{\Omega_{t_{k-1},T}}\mathbbm{1}_{\{u\geq c_{k-1} \}} \dd x \dd t \right)\nonumber\\
& + C \left(\displaystyle\iint_{\Omega_{t_{k-1},T}}(u-c_{k-1})_{+}^{\rho} \dd x \dd t +  \displaystyle\iint_{\Omega_{t_{k-1},T}}\mathbbm{1}_{\{u\geq c_{k-1} \}} \dd x \dd t \right)^{\frac{\rho-2}{\rho}} \nonumber\\
 \leq &  C^k \left(U_{k-2}+ U_{k-2}^{\frac{\rho-2}{\rho}}\right).
 \end{align}
By definition $U_k$ is non-increasing so assuming that $U_0<1$, we have $U_k<1$ for every $k\geq 0$. 
Combining \eqref{lm1 ineq4} and \eqref{lm1 ineq3} and assuming $U_0<1$, we deduce that $U_k$ satisfies the formula
$$U_k\leq C^k \left(U_{k-2}+ U_{k-2}^{\frac{\rho-2}{\rho}}\right) \left(U_{k-2}^{1-\frac{2}{\sigma}} + U_{k-2}^{\frac{\rho-2}{\rho}(1-\frac{2}{\sigma})}\right) \leq C^{k} U_{k-2}^{\lambda},$$
with $\lambda=\frac{\rho-2}{\rho}\left(2-\frac{2}{\sigma}\right)>1$ for $\rho>m+2$ which ends the proof using \cite[Lemma 3.12]{gueDGhalv2} choosing $\delta<\min\left(C^{-\frac{\lambda^2}{(\lambda-1)^2}},1\right)$.
\end{proof}



\subsection*{Acknowledgements.}  
The authors thank Cl\'ement Mouhot for fruitful discussions about the subject. JG acknowledges the support of partial funding by the ERC grant MAFRAN 2017-2022, AM the support by the EPSRC grant EP/L016516/1 for the University of Cambridge, CCA and the Huawei fellowship at IHES. Part of this work was done while AT enjoyed the warm atmosphere of DPMMS, Univ. Cambridge.

\bibliographystyle{alpha}
\bibliography{bibliography}

\end{document}